\documentclass[12pt,reqno]{amsart}
\usepackage[top=1in, bottom=1in, left=1in, right=1in]{geometry}

\usepackage{times, amsthm, amssymb, amsmath, amsfonts, graphicx, mathrsfs}
\usepackage{mathtools}
\usepackage{tikz} 
\usetikzlibrary{arrows, cd, matrix,positioning}
\usepackage{xcolor}
\usepackage{bbm}
\usepackage{xypic}
\usepackage{mathscinet}

\newtheorem{theorem}{Theorem}[section]
\newtheorem{lemma}[theorem]{Lemma}
\newtheorem{proposition}[theorem]{Proposition}

\newtheorem{question}[theorem]{Question}

\newtheorem{introtheorem}{Theorem}

\theoremstyle{definition}
\newtheorem{definition}[theorem]{Definition}
\newtheorem{example}[theorem]{Example}
 
\theoremstyle{remark}
\newtheorem{remark}[theorem]{Remark}
\newtheorem*{fact}{Fact}

\newtheorem*{notation}{Notation}

\DeclareMathOperator{\Sym}{Sym}
\DeclareMathOperator{\rs}{rs}
\DeclareMathOperator{\RS}{RS}
\DeclareMathOperator{\coker}{coker}

\newcommand{\degree}{\operatorname{degree}}
\newcommand{\HF}{\operatorname{HF}}

\newcommand{\bbL}{\mathbbm{L}}

\newcommand{\bbR}{\mathbbm{R}}

\newcommand{\bbk}{\mathbbm{k}}

\newcommand{\frakm}{{\mathfrak{m}}}

\newcommand{\bfC}{{\mathbf{C}}}
\newcommand{\bfF}{{\mathbf{F}}}
\newcommand{\bfG}{{\mathbf{G}}}
\newcommand{\bfI}{{\mathbf{I}}}
\newcommand{\bfK}{{\mathbf{K}}}
\newcommand{\bfP}{{\mathbf{P}}}
\newcommand{\bfT}{{\mathbf{T}}}

\newcommand{\bfEN}{{\mathbf{EN}}}

\newcommand{\del}{{\partial}}

\renewcommand{\th}{{^\text{th}}}

\title{Subcomplexes of Certain Free Resolutions}
\author{Maya Banks}
\author{Aleksandra Sobieska}
\date{\today}

\thanks{The first author was partially supported by NSF grant DMS-1902123.}

\parindent0pt 
\parskip10pt

\begin{document}

\maketitle

\begin{abstract}
What are the subcomplexes of a free resolution? This question is simple to state, but the naive approach leads to a computational quagmire that is infeasible even in small cases. In this paper, we invoke the Bernstein--Gel\cprime fand--Gel\cprime fand (BGG) correspondence to address this question for free resolutions given by two well-known complexes, the Koszul and the Eagon--Northcott. This novel approach provides a complete characterization of the ranks of free modules in a subcomplex in the Koszul case and imposes numerical restrictions in the Eagon--Northcott case.
\end{abstract}

\section{Introduction}

\begin{question}
What are the possible subcomplexes of free resolutions given by the Koszul or Eagon--Northcott complexes? 
\end{question}
The motivating question of this article is simple to state, but there are no traditional methods in commutative algebra to answer it, or indeed its more general sibling:

\begin{question}
Given a minimal free resolution $\bfG$ of a module over a local or graded ring, what are the subcomplexes $\bfF$ that have a split injective map into $\bfG$? 
\end{question}

We first learned of Question 1.1 for Koszul complexes in relation to ongoing work of Bertram and Ullery on Veronese secants and stable complexes. More broadly, we are also motivated by various uses of subcomplexes in the study of free resolutions. For instance, Question 1.2 is at the heart of recent work on virtual resolutions, where classifying and understanding subcomplexes with specific properties is the key to ~\cite[Theorem 3.1]{besVirtualResFAProdOProjSp} as well as related results such as ~\cite{harada2021virtual}. Even more generally, subcomplexes are fundamental objects of study in the world of syzygies; the linear strand, for example, plays an essential role in many results ~\cite{gKoszulCohomATGeomOProjVar, gKoszulCohomATGeomOProjVarII, ekSLinearSyzConj, gScrollarSyzOGenCanonicalCurvesWGenusle8}. 

The goal of this paper is to further understand the structure of subcomplexes from a numerical standpoint. This numerical approach fits in with the well-established broader approach to understanding minimal free resolutions numerically (for instance via the study of Betti tables or Poincaré series). What is more, the numerical realm is the natural place in which to explore our main question, since a change of basis introduces an infinite number of possible subcomplexes. 

In order to precisely state the numerical version of our question, we must first introduce some terminology. For a free complex $\bfF$, we define the \emph{rank sequence} of $\bfF$ to be the integer sequence $\rs(\bfF)~=~(r_0, r_1, \ldots )$ where $r_i$ is the rank of the $i^{th}$ free module in $\bfF$ (we refrain from calling these ``Betti numbers" since our subcomplexes may in general fail to be exact). For a complex $\bfG$, we will denote by $\RS(\bfG)$ the set of all integer sequences $r$ where $r = \rs(\bfF)$ for some subcomplex $\bfF$ of $\bfG$. Now we may ask the following:

\begin{question}
Given a free resolution $\bfG$, when is an integer sequence $r$ in $\RS(\bfG)$?
\end{question}

One might be tempted to approach the problem directly by trying to explicitly produce subcomplexes of free modules with prescribed ranks, but this raises certain subtleties even in small cases. We find that without some clear strategy for controlling subcomplexes, even this numerical question becomes hard. 

For a concrete example, let $S = \bbk[x_1, x_2, x_3]$ and $\bfG$ be the minimal free resolution of the residue field which is given by the Koszul complex on $x_1, x_2, x_3$, and consider the question ``Is there a subcomplex $\bfF$ of $
\bfG$ with ranks $r=(1,2,2,0)$?'' That is, ``Is $(1,2,2,0)$ in $\RS(\bfG)$?'' 
To answer this question directly requires a linear algebra analysis of the maps required to fill in a diagram like the one below, thus ultimately producing $\bfF$ in its entirety:

\[
\xymatrix{ 
    \bfF: 0 \ar[r] 
        & 0 \ar[r]\ar[d]
        & S(-2)^2 \ar[d]_{\varphi_{2}} \ar[r]^{f_2}
        & S(-1)^2 \ar[d]_{\varphi_1} \ar[r]^{f_1} 
        & S \ar[r] \ar[d]_{\varphi_0}
        & 0 \\
    \bfG: 0 \ar[r] 
        & S(-3) \ar[r]
        & S(-2)^3 \ar[r]
        & S(-1)^3 \ar[r]
        & S \ar[r] 
        & 0
    }
\]

Thus the restriction of the question to ranks does not give a commensurate improvement in obtaining an answer, and it is clear that a different approach is necessary.

This alternate approach uses the Bernstein--Gel'fand--Gel'fand (BGG) correspondence, which gives an equivalence of categories between graded linear complexes of free modules over a polynomial ring on the one hand and graded modules over an exterior algebra on the other. The question of subcomplexes thus becomes a question of submodules, where the restriction to possible ranks is translated to a question of possible Hilbert functions. Here we make use of existing results in a way that makes broad restrictions possible without constructing entire complexes.

In this paper, we provide some answers to the question of possible subcomplexes of resolutions for two classes of modules over the polynomial ring---complete intersections and quotients by some determinantal ideals---whose minimal free resolutions are given by the Koszul and Eagon--Northcott complexes respectively. The novelty of these theorems is twofold: besides providing constraints on permissible subcomplexes, they demonstrate the efficacy of the correspondence in tackling an otherwise intractable problem and provide insight on how similar results might be obtained for other free resolutions. In fact, since the BGG correspondence is an instance of Koszul duality, these techniques could extend to characterizing subcomplexes of linear complexes over general Koszul algebras, e.g. the Priddy complex.

Our first theorem exactly characterizes the integer sequences that can arise as ranks of subcomplexes of a minimal free resolution of a complete intersection. Let $S = \bbk[x_1, \ldots, x_n]$ and $\bfK(f_1, \ldots, f_m)$ be the Koszul complex on $f_1, \ldots, f_m$. 

\begin{introtheorem}[Theorem~\ref{thm:generalkoszulranks}]\label{thm:intro1}
Let $f_1, \ldots, f_m$ be homogeneous polynomials forming a regular sequence in the polynomial ring $S = \bbk[x_1, \ldots, x_n]$ and $r = (r_0, \ldots, r_m)$ be a non-negative integer sequence. Then $r\in \RS(\bfK(f_1, \ldots, f_m))$ if and only if  it is the zero sequence or $r_0 = 1$ and
\[
0\leq r_{i+1}\leq r_i^{(i)} for 1\leq i\leq n-1,
\]
where $r_i^{(i)}$ is the shifted Macaulay expansion of $r_i$ as in Definition~\ref{def:macaulayexpansion}.
\end{introtheorem}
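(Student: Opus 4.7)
The plan is to invoke the BGG correspondence to convert the problem into a characterization of Hilbert functions of graded modules over an exterior algebra, where the Kruskal--Katona theorem provides the desired bounds. The first step is a reduction to the case of a regular sequence of variables. Let $S' = \bbk[y_1, \ldots, y_m]$ and consider the ring map $\pi : S' \to S$ sending $y_i \mapsto f_i$. Then $\bfK(f_1, \ldots, f_m) = \bfK(y_1, \ldots, y_m) \otimes_{S'} S$ is the base change, and base change preserves ranks and carries subcomplexes to subcomplexes; hence $\RS(\bfK(y_1,\ldots,y_m)) \subseteq \RS(\bfK(f_1,\ldots,f_m))$.

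In the linear setting, $\bfK(y_1, \ldots, y_m)$ is a linear free resolution of $\bbk$ over $S'$, and under the BGG correspondence it is the image of the exterior algebra $E = \bigwedge \bbk^m$ viewed as a graded module over itself. BGG translates graded subcomplexes of $\bfK(y_1,\ldots,y_m)$ into graded $E$-submodules of $E$ (equivalently, using the Matlis self-duality of $E$, into cyclic graded quotients of $E$), with the rank sequence of the subcomplex matching the Hilbert function of the associated $E$-module. The Kruskal--Katona theorem then characterizes precisely those sequences $(h_0, \ldots, h_m)$ with $h_0 = 1$ (when nonzero) and $h_{i+1} \leq h_i^{(i)}$ for all $i \geq 1$, where $h_i^{(i)}$ is the shifted Macaulay expansion in Definition~\ref{def:macaulayexpansion}, as those arising as Hilbert functions of such $E$-modules. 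Combined with the base-change inclusion, this establishes that every admissible sequence lies in $\RS(\bfK(f_1,\ldots,f_m))$.

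The reverse containment---that every $r \in \RS(\bfK(f_1,\ldots,f_m))$ satisfies the Kruskal--Katona bounds---is the harder direction and cannot be obtained merely by inverting base change, since a subcomplex of $\bfK(f_1,\ldots,f_m)$ does not obviously descend to a subcomplex of $\bfK(y_1,\ldots,y_m)$. The plan is to associate directly to any subcomplex $F \subseteq \bfK(f_1, \ldots, f_m)$ a graded $E$-module whose Hilbert function records the rank sequence of $F$, by extracting this module from the linear part (or symbol) of the Koszul differential, whose combinatorial shape depends only on $m$ and not on the degrees of the $f_i$. Alternatively, one could use upper-semicontinuity in a flat deformation from $\bfK(y_1,\ldots,y_m)$ to $\bfK(f_1,\ldots,f_m)$ to transfer the upper bound from the linear setting to the general one. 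I anticipate this final step---the passage from an arbitrary subcomplex of $\bfK(f_1,\ldots,f_m)$ to a matching $E$-module---to be the principal technical obstacle of the proof.
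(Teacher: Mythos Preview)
Your forward direction is exactly the paper's approach: base change from $\bfK(y_1,\ldots,y_m)$ to $\bfK(f_1,\ldots,f_m)$, together with BGG and the Kruskal--Katona/Aramova--Herzog--Hibi characterization of Hilbert functions of quotients of the exterior algebra.

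The gap is in the reverse direction. You flag it as ``the principal technical obstacle'' and propose extracting a symbol module or running a semicontinuity argument, but neither is carried out, and in fact neither is needed. The paper's argument here is elementary and you have overlooked it: by the definition of subcomplex in this paper, the inclusions $\varphi_i\colon F_i\hookrightarrow K_i$ are \emph{split} injections, hence given by matrices with entries in $\bbk$. The differentials of $\bfK(f_1,\ldots,f_m)$ have entries that are $\bbk$-linear in the $f_i$, so they lie in the subalgebra $R=\bbk[f_1,\ldots,f_m]\subset S$. Since each $\varphi_{i-1}$ has a $\bbk$-linear left inverse $\psi_{i-1}$, the subcomplex differential factors as $\psi_{i-1}\circ\del'_i\circ\varphi_i$, which is a matrix over $R$ as well. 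Now simply substitute $f_i\mapsto y_i$ to obtain a subcomplex of $\bfK(y_1,\ldots,y_m)$ with the same rank sequence. Thus a subcomplex of $\bfK(f_1,\ldots,f_m)$ \emph{does} descend, contrary to your worry, and $\RS(\bfK(f_1,\ldots,f_m))=\RS(\bfK(y_1,\ldots,y_m))$ on the nose. Your proposed alternatives (symbol/linear-part extraction, flat deformation) might be made to work, but they introduce machinery for a step that is a two-line linear-algebra observation once you remember that ``split'' means ``over $\bbk$.''
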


We are then able to leverage this complete characterization along with the BGG correspondence to obtain meaningful restrictions on the integer sequences that can arise as ranks of subcomplexes of the Eagon--Northcott complex. Before we can state this result we need to introduce some more notation:
given complexes $\bfF$ and $\bfG$, we write $\RS(\bfF) + \RS(\bfG)$ for the set of sequences $r$ that can be written as an entry-wise sum $r = r_1 + r_2$ for $r_1\in \RS(\bfF)$ and $r_2\in \RS(\bfG)$. For an integer $a$, the set $a\RS(\bfF)$ is defined to be the $a$-fold sum $\RS(\bfG) + \cdots + \RS(\bfG)$.
We denote by $\bfK_{(m)}$ the Koszul complex on $m$ variables

With this notation, we can state our second main result about resolutions of ideals $I$ where the Eagon--Northcott complex gives a minimal free resolution of $S/I$.

\begin{introtheorem}[Theorem~\ref{thm:generalENranks}]\label{thm:intro2}
Let $S = \bbk[x_1, \ldots, x_n]$ and let $\phi$ be a $p\times q$ matrix with $p\leq q$ such that the maximal minors of $\phi$ generate an ideal $I$ of codimension $q-p+1$ where $S/I$ is Cohen-Macaulay. Let $\bfF$ be the minimal free resolution of $S/I$, and let $r$ be an integer sequence. If $r\in \RS(\bfF)$, then $r = (0, r_0, r_1,\ldots )$ or $r = (1, r_0, r_1, \ldots)$ where the sequence $(r_0, r_1, \ldots)$ is in 
\[
\sum_{j=0}^{q-p}\binom{q-j-1}{p-1}\RS(\bfK_{(q-p-j)}).
\]
\end{introtheorem}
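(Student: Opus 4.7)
The plan is to reduce Theorem B to Theorem A through the BGG correspondence, by exhibiting a filtration of the BGG dual of the Eagon--Northcott tail whose subquotients are BGG duals of Koszul complexes. The dichotomy $r_0\in\{0,1\}$ is immediate from $\bfG_0=S$ having rank one. For the remainder, note that every differential of the EN complex except the first (which uses maximal minors of degree $p$) is linear. Truncating away $\bfG_0$ therefore yields, after a uniform twist, a linear complex $\bfG_{\geq 1}$ of graded free $S$-modules, and any subcomplex $\bfF\subseteq\bfG$ restricts to a linear subcomplex $\bfF_{\geq 1}\subseteq\bfG_{\geq 1}$ whose rank sequence is exactly $(r_0,r_1,\ldots)$.

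Applying BGG to $\bfG_{\geq 1}$ produces a graded $E$-module $M$, where $E$ is the Koszul dual exterior algebra of $S$. The correspondence sends subcomplexes to submodules and rank sequences to Hilbert functions. The core of the argument is to produce a filtration
\[
0=M^{(0)}\subsetneq M^{(1)}\subsetneq\cdots\subsetneq M^{(N)}=M
\]
of $E$-modules whose subquotients are BGG duals of Koszul complexes, with $\bfK_{(q-p-j)}$ occurring exactly $\binom{q-j-1}{p-1}$ times for each $j=0,\ldots,q-p$; the hockey-stick identity verifies that the Hilbert functions of the subquotients sum to that of $M$. To construct this filtration I would use the description of the EN modules as $\wedge^{p+i-1}F\otimes\Sym^{i-1}G^*$: a complete flag $0=G^*_0\subsetneq G^*_1\subsetneq\cdots\subsetneq G^*_p=G^*$ induces a filtration on each $\Sym^{i-1}G^*$, hence on $M$, and the successive subquotients, translated back through BGG, should be identifiable with twists of Koszul subcomplexes indexed by how many rows of $\phi$ have been ``used.''

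Given this filtration, any submodule $N\subseteq M$ inherits the filtration $N^{(k)}:=N\cap M^{(k)}$, whose subquotients $N^{(k)}/N^{(k-1)}$ inject into $M^{(k)}/M^{(k-1)}$. Via BGG, each such subquotient corresponds to a subcomplex of the relevant Koszul complex $\bfK_{(q-p-j)}$, so Theorem~\ref{thm:intro1} places its Hilbert function in $\RS(\bfK_{(q-p-j)})$. Additivity of Hilbert functions along the filtration then gives $\HF(N)=\sum_k\HF(N^{(k)}/N^{(k-1)})$, expressing $(r_0,r_1,\ldots)$ as a sum with the prescribed multiplicities and completing the argument. The principal obstacle is producing the filtration itself: identifying each subquotient as the BGG dual of a Koszul complex with the correct index requires genuinely representation-theoretic input, and one must settle for a filtration rather than a direct sum decomposition (which need not exist as an $E$-module splitting), noting that submodules still inherit a compatible filtration even without splitting, which is exactly what the entry-wise sum in the conclusion requires.
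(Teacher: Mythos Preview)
Your high-level architecture is sound: pass to the linear tail, apply BGG, filter the resulting $E$-module so that subquotients are BGG duals of Koszul complexes, then intersect the filtration with an arbitrary submodule and use additivity of Hilbert functions. This is exactly the skeleton the paper uses. However, two substantive pieces of the paper's argument are missing from your proposal, and without them the proof does not go through.

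First, you apply BGG directly to the tail of $\bfEN(\phi)$, asserting that ``every differential of the EN complex except the first is linear.'' That is only true when the entries of $\phi$ are linear forms, which is not among the hypotheses. The paper does \emph{not} apply BGG to $\bfEN(\phi)$ at all. Instead it first observes (Theorem~\ref{thm:generalENranks}) that the artinian reduction of $S/I$ is $S'/\frakm^p$ for a polynomial ring $S'$ in exactly $q-p+1$ variables, so the specialization map sends any subcomplex of $\bfEN(\phi)$ to a subcomplex of $\bfEN(M^{q-p+1,p})$, giving $\RS(\bfEN(\phi))\subseteq\RS(\bfEN(M^{q-p+1,p}))$. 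Only then, working over this smaller polynomial ring where the matrix is the explicit $M^{q-p+1,p}$ with linear entries, does BGG enter. This specialization step is essential both to remove the linearity hypothesis on $\phi$ and to land in an exterior algebra on $q-p+1$ generators, which is why the Koszul complexes $\bfK_{(q-p-j)}$ with $j\le q-p$ are the ones that appear.

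Second, the filtration you propose, coming from a complete flag in $G^*$, is not the one the paper uses, and you acknowledge that identifying its subquotients with Koszul complexes is the ``principal obstacle'' you have not resolved. The paper builds its filtration on the \emph{module} side rather than the complex side: writing $N_{n,d}$ for the $E$-module with $\bbL(N_{n,d})=L_{n,d}$, one has a short exact sequence $0\to S_{\ge d-1}(-1)\to S_{\ge d}\to S'_{\ge d}\to 0$ of $S$-modules, and since $\bbR$ is exact this yields $0\to N_{n,d-1}\to N_{n,d}\to \overline{N_{n-1,d}}\to 0$ (Theorem~\ref{thm:HFNndcontainment}). Iterating in both $n$ and $d$ and counting lattice paths gives the binomial multiplicities (Lemma~\ref{lem:hfnnd}); a further explicit filtration of $\overline{N_{i,1}}\cong E/I_i$ by principal ideals (Lemma~\ref{lem:hfni1}) finishes the reduction to $\HF(E_{(m)})$, i.e.\ to Koszul rank sequences. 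These short exact sequences are concrete and require no representation-theoretic identification; your flag filtration, by contrast, would need a separate argument that the EN differential respects it and that the induced subquotient complexes are Koszul, neither of which you supply.
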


\begin{example}
Let $S = \bbk[x_1,x_2,x_3]$, 
$\phi = \begin{bmatrix}
x_1 & x_2 & x_3 & 0 \\
0 & x_1 & x_2 & x_3
\end{bmatrix}$,
and $I$ be the ideal of $2 \times 2$ minors of $\phi$.
The minimal free resolution of $S/I$ is an Eagon--Northcott complex of the form
\[
\bfG: \quad 0 \to S(-4)^3 \to S(-3)^8 \to S(-2)^6 \to S,
\]
with maps as shown in Example~\ref{ex:eagonnorthcottforn3d2}.
We can find subcomplexes of $\bfG$ of the form
\[
0 \to S(-4)^2 \xrightarrow{f_3} S(-3)^6 \xrightarrow{f_2} S(-2)^5 \xrightarrow{f_1} S
\]

and
\[
0 \rightarrow S(-4)^1 \rightarrow S(-3)^3 \rightarrow S(-2)^3 \rightarrow S,
\]
but combining Theorem~\ref{thm:intro2} with the characterization of subcomplexes of the Koszul complex given in Theorem ~\ref{thm:intro1} rules out a subcomplex of $\bfG$ of the form
\[
0 \to S(-4)^3 \to S(-3)^5 \to S(-2)^5 \to S.
\]
This is because $(1,5,5,3)$ is not of the form $(1,r)$, where $r$ is in the set 
\[R = 3 \RS(\bfK_{(2)}) + 2 \RS(\bfK_{(1)}) + \RS(\bfK_{(0)}).\]
By Theorem~\ref{thm:intro1}, 
$\RS(\bfK_{(0)}) = \{(1,0,0), (0,0,0)\}$, 
$\RS(\bfK_{(1)}) = \{(1,1,0), (1,0,0), (0,0,0) \}$,
and 
$\RS(\bfK_{(2)}) = \{(1,2,1), (1,2,0), (1,1,0), (1,0,0), (0,0,0)\}$. If a sequence in $R$ has a $3$ in the last spot, we must use the sequence $(1,2,1)$ from $\RS(\bfK_{(2)})$ thrice. However, three times $(1,2,1)$ gives a $6$ in the middle position, so any sequence in $R$ with a $3$ in the last spot has at least a $6$ in the middle. Therefore $(5,5,3) \notin R$ and thus $(1,5,5,3) \notin \RS(\bfG)$. 
\end{example}

As seen in the example, the common theme throughout our results is that subtle numerics govern whether a given sequence of free modules and maps between them has any hope of being a complex. This situation is reminiscent of numerical conditions that tell us when a given complex can be exact -- a far more well-studied question. Many results are concerned with precisely characterizing exactness, while understanding when a sequence of maps is a complex is taken for granted. To riff on the title of \cite{beWhatMakesAComplexExact}, herein we set out to explore the dual question: ``What makes a complex a complex?''

\subsection*{Acknowledgments} The authors would like to thank Daniel Erman for originally suggesting the motivating question and many helpful discussions throughout all stages of the article's development. We also thank the anonymous referees for their thoughtful comments and suggestions.

\section{Background}\label{section:background}

For the sake of clarity, we settle a formal definition of ``subcomplex.'' 

\begin{definition}
Let $\bfF=(F_i,f_i)$ and $\bfG=(G_i,g_i)$ be two complexes of free modules over the same ring. We say $\bfF$ is a \textit{subcomplex} of $\bfG$ if there are split injective maps $\varphi_i: F_i \rightarrow G_i$ so that $\varphi_i \circ f_{i+1} = g_{i+1} \circ \varphi_{i+1}$, i.e. each square of the following diagram commutes:
\[
\xymatrix{ 
    \bfF: \cdots \ar[r] & F_{i+1} \ar[d]_{\varphi_{i+1}} \ar[r]^{f_{i+1}} & F_{i} \ar[d]_{\varphi_{i}} \ar[r]^{f_{i}} & F_{i-1} \ar[d]_{\varphi_{i-1}} \ar[r]^{f_{i-1}} & \cdots \\
    \bfG: \cdots \ar[r] & G_{i+1} \ar[r]_{g_{i+1}} & G_{i} \ar[r]_{g_i} & G_{i-1} \ar[r]_{g_{i-1}} & \cdots 
    }
\]
\end{definition}

In particular, we exclude injective maps like $\varphi_i: G_i(-1) \xrightarrow{\cdot x} G_i$. In the cases we are interested in, these $\varphi_i$ can be represented by matrices with full column rank and entries from the ground field $\bbk$. 

Given a free complex $\bfG$, our goal will be to classify the ranks of free modules appearing in subcomplexes of $\bfG$. We introduce some notation that will be used throughout.

\begin{definition}
Given a free complex $\bfF = \cdots \to F_1 \to F_0 $, the \emph{rank sequence} of $\bfF$ is 
\[\rs(\bfF) = (r_0, r_1, \ldots)\] 
where $r_i$ is the rank of the free module $F_i$. For a complex $\bfG$, we use $\RS(\bfG)$ to denote the set of all possible rank sequences of subcomplexes of $\bfG$.
\end{definition}

\begin{notation} Given two sets of rank sequences, say $A = \RS(\bfF)$ and $B = \RS(\bfG)$, we will write $A+B$ to refer to the set of sequences that may be expressed as a sum of a sequence in $A$ and a sequence in $B$. Similarly, we will write $nA$ to refer to the set $A+A+\cdots + A$ where the sum has $n$ terms.
\end{notation}

\subsection{The BGG Correspondence}

The key tool for our results is the Bernstein-Gel'fand-Gel'fand correspondence \cite{bggAlgVecBunOPnAProbOLinAlg}, which allows us to translate questions about linear free complexes of modules over a symmetric  algebra into questions about modules over an exterior algebra. We will cherry-pick what we need of this rich subject; for further detail, see \cite{efsSheafCohomAFreeResOExtAlg} and \cite[Section~7B]{eGeomOSyz}.

Let $\bbk$ be a field, $V$ be a $\bbk$-vector space with basis $x_1, \ldots, x_n$, and $W$ be the dual vector space of $V$ with basis $e_1, \ldots, e_n$. Let $E = \bbk\langle e_1, \ldots, e_n\rangle$ denote the exterior algebra on $W$.  Let $S$ denote the symmetric algebra $\Sym(V)$, and identify $S$ with the polynomial ring $\bbk[x_1, \ldots, x_n]$. We will assume that the $x_i$ are graded in degree $1$, and the $e_i$ are graded in degree $-1$. Unless otherwise stated, all tensor products are assumed to be over the ground field $\bbk$.

We define a pair of functors $\bbL$ and $\bbR$ as follows:
\begin{align*}
\bbL: \{\text{Graded $E$-modules}\} &\to \{\text{Linear complexes of free $S$-modules}\}\\
N &\longmapsto (\cdots\to S\otimes N_d\xrightarrow[]{\partial_d} S\otimes N_{d-1}\to\cdots)
\end{align*}
with differential $\partial_d$ defined by linearly extending
\[
1\otimes f\mapsto \sum_{i=1}^n x_i\otimes fe_i
\]
and
\begin{align*}
\bbR: \{\text{Graded $S$-modules}\} &\to \{\text{Linear complexes of free $E$-modules}\}\\
M &\longmapsto (\cdots\to E\otimes M_d\xrightarrow[]{\partial_d} E\otimes M_{d+1}\to\cdots)
\end{align*}

with differential $\partial_d$ defined by linearly extending
\[
1\otimes g\mapsto \sum_{i=1}^n e_i\otimes gx_i.
\]

One can check that the functors $\bbL$ and $\bbR$ preserve exactness.

\begin{example}\label{ex:1441}
Consider the module $N = \langle e_1, e_2e_3 \rangle E$ where $E = \bbk\langle e_1, e_2, e_3, e_4\rangle$. We will use the following $\bbk$-bases for the graded pieces of $N$: 
\[
\begin{array}{ll}
    \degree \ -1 &: e_1 \\
    \degree \ -2 &: e_1e_2, \ e_1e_3, \ e_1e_4, \ e_2e_3 \\ 
    \degree \ -3 &: e_1e_2e_3, \ e_1e_2e_4, \ e_1e_3e_4, \ e_2e_3e_4 \\
    \degree \ -4 &: e_1e_2e_3e_4
\end{array}
\]

Tracing through the definition of $\bbL$ we can see, for example, that \[\del_{-2}(1 \otimes e_1e_2) = \sum\limits_{i=1}^4 x_i \otimes e_1e_2e_i = x_3 \otimes e_1e_2e_3 + x_4 \otimes e_1e_2e_4.\] 

The entirety of $\bbL(N)$ is the complex: 
\[ 
0 \rightarrow S \otimes N_{-1}
\xrightarrow{
\begin{bmatrix}
x_2 \\ x_3 \\ x_4 \\ 0
\end{bmatrix}
}
S \otimes N_{-2}
\xrightarrow{ 
\begin{bmatrix}
x_3 & -x_2 & 0 & x_1 \\ 
x_4 & 0 & -x_2 & 0 \\ 
0 & x_4 & -x_3 & 0 \\ 
0 & 0 & 0 & x_4
\end{bmatrix}
}
S \otimes N_{-3}
\xrightarrow{
\begin{bmatrix}
x_4 & -x_3 & x_2 & -x_1
\end{bmatrix}
}
S \otimes N_{-4} \rightarrow 0
\]
\end{example}

The BGG correspondence states that if we consider $\bbL$ and $\bbR$ as functors on the bounded derived categories then they are adjoint, implying that the derived categories of bounded linear complexes of finitely generated graded $E$-modules and $S$-modules are equivalent. What is more, the functor $\bbL$ gives a bijection on objects under which 
\begin{enumerate}
    \item Any linear complex $\bfF$ of $S$-modules may be expressed as $\bbL(N)$ for some $E$-module $N$ \cite{efsSheafCohomAFreeResOExtAlg}, and
    \item Subcomplexes of $\bfF$ correspond to $E$-submodules of $N$.
\end{enumerate}

\begin{remark}\label{rmk:hfrs} For $\bfF = \bbL(N)$, we can therefore relate the Hilbert function of $N$ and the rank sequence $\rs(\bfF)$. Take $r = (r_0, r_1, \ldots, r_n)$ and $h=(h_0, h_1, \ldots, h_n)$ to be two sequences of non-negative integers. Then $r = \rs(\bfF)$ if and only if $h_i = r_{n-i}$ is the Hilbert function of $N$, that is, if $h_i = \dim_\bbk(N_{-i}) = r_{n-i}$. Note that we are still considering the Hilbert function $h$ as a function from $\mathbb{N}$ to $\mathbb{N}$, despite the negative grading on $E$. We will occasionally commit the minor sin of conflating $h$ as a function and an integer sequence, and thus write $h(N)$ for the sequence $(h_0, h_1, \ldots, h_n)$ where $h_i = \dim_\bbk(N_{-i})$. 
\end{remark}

\begin{remark}
A quick check reveals that shifting the homological degree of a complex $\bfF$ corresponds with twisting an $E$-module by that same degree, that is, if $\bbL(N) = \bfF$, then $\bbL(N(i)) = \bfF[i]$, where $\bfF[i]_j = \bfF_{i+j}$.
\end{remark}

We also make use of the following relationship between $\bbL$ and $\bbR$:

\begin{theorem}{(Reciprocity Theorem) \cite[Theorem~3.7]{efsSheafCohomAFreeResOExtAlg}} \label{thm:reciprocitythm}
Let $M$ be a graded $S$-module and let $N$ be a graded $E$-module. Then 
\[
N\to \bbR(M)
\]
is an injective resolution if and only if 
\[
\bbL(N)\to M
\]
is a free resolution.
\end{theorem}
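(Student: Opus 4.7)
The plan is to deduce the biconditional from two natural quasi-isomorphisms---$\bbL(\bbR(M)) \xrightarrow{\sim} M$ for every graded $S$-module $M$, and $N \xrightarrow{\sim} \bbR(\bbL(N))$ for every graded $E$-module $N$---together with the Frobenius property of $E$, which identifies free $E$-modules with injective ones.

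First I would verify the quasi-isomorphism $\bbL(\bbR(M)) \to M$. Unwinding the definitions shows that $\bbL(\bbR(M))$ is the total complex of a bicomplex whose $(p,q)$-entry is, up to shifts, $S \otimes \bigwedge^p W \otimes M_q$, with one differential built from the Koszul element $\sum_i x_i \otimes e_i$ and the other from the structure of $\bbR(M)$. Filtering along $q$ yields a spectral sequence whose rows are copies of the Koszul complex on $x_1, \ldots, x_n$ tensored with $M_q$; these are exact except in cohomological degree $0$, where the surviving terms assemble via multiplication into $M$. The dual statement---that $N \to \bbR(\bbL(N))$ is an injective resolution---follows from a symmetric bicomplex computation, using that the $E$-modules appearing there are free and hence injective.

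The two directions of the theorem now fall out. For the ``if'' direction, suppose $\bbL(N) \to M$ is a free resolution, i.e.\ a quasi-isomorphism. Because $\bbR$ preserves exactness (as noted earlier in the excerpt), applying it termwise and totalizing yields a quasi-isomorphism $\bbR(\bbL(N)) \to \bbR(M)$. Composing with $N \to \bbR(\bbL(N))$ from the previous step produces a quasi-isomorphism $N \to \bbR(M)$, and since every term of $\bbR(M)$ is free over $E$ and hence injective, this is precisely an injective resolution. The ``only if'' direction is entirely symmetric: apply $\bbL$ to $N \to \bbR(M)$ and compose with the augmentation $\bbL(\bbR(M)) \to M$.

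The main obstacle I anticipate is the bookkeeping in the bicomplex arguments: aligning the negative grading on $E$ with the cohomological grading on complexes, and confirming that the relevant spectral sequence degenerates at $E_2$, both require care. A secondary subtlety is that $\bbL$ and $\bbR$ are nominally functors on modules, so extending them to complexes via totalization must be set up carefully before the above manipulations make formal sense; once that framework is in place, however, the structural steps should go through formally.
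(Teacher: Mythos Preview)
The paper does not give its own proof of this statement; it is quoted from \cite[Theorem~3.7]{efsSheafCohomAFreeResOExtAlg} in the background section and used as a black box thereafter. Consequently there is nothing in the paper to compare your proposal against. For what it is worth, your outline---reducing to the natural quasi-isomorphisms $\bbL(\bbR(M))\simeq M$ and $N\simeq \bbR(\bbL(N))$ via a Koszul-type bicomplex argument, then invoking that free and injective $E$-modules coincide---is essentially the strategy in the cited reference, so the sketch is on the right track even though the present paper simply imports the result.
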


\subsection{Tate Resolutions}

\begin{definition}
For any module $N$ over any ring, we can combine a projective resolution $\bfP$ of $N$ and an injective resolution $\bfI$ of $N$ in the following way 
\[
\begin{tikzcd}
\cdots \arrow[r, "\del_2"] & P_1 \arrow[r, "\del_1"] & P_0 \arrow[rd] \arrow[rr, "\del_0"] & 
& I_0 \arrow[r, "\del_{-1}"] & I_1 \arrow[r, "\del_{-2}"] & \cdots \\
& & & N \arrow[rd] \arrow[ru] & & & \\
& & 0 \arrow[ru] & & 0 & & 
\end{tikzcd}
\]
to produce a \textit{Tate resolution}. 
\end{definition}

More detail about general Tate resolutions can be found in \cite{efsSheafCohomAFreeResOExtAlg}, but 
we are most interested in Tate resolutions of modules over $E$, where injective and projective modules are both free. In this case, we can take $\bfP$ to be a minimal free resolution of $N$ and $\bfI$ to be the dual of the minimal free resolution of the dual of $N$ to create a unique doubly infinite exact complex of free modules where the image of $P_0$ is isomorphic to $N$. We will call this doubly infinite complex \textit{the} Tate resolution $\bfT(N)$. 

\begin{example}\label{ex:tateres}
If $N = E/\langle e_1, \ldots, e_n \rangle \cong \bbk$, then the Cartan resolution $(\bfC,\del)$ is a projective resolution of $N$ (cf. \cite[Corollary~7.10]{eGeomOSyz}). The dual of $\bfC$ is an injective resolution of $\bbk$ (which is its own dual), so stitching the two together yields the Tate resolution of $\bbk$. Below is a snippet of $\bfT(\bbk)$ in the $n=3$ case: 
\[ 
\hspace{-1cm}
\begin{tikzcd}
\cdots \arrow[r] & E^6(2) \arrow[r, "\del_2"] & E^3(1) \arrow[r, "\del_1"] & E \arrow[rd] \arrow[rr, "\del_0"] & 
& E(-3) \arrow[r, "\del_1^T"] & E^3(-4) \arrow[r, "\del_2^T"] & E^6(-5) \arrow[r] & \cdots \\
& & & & \bbk \arrow[rd] \arrow[ru] & & & & \\
& & & 0 \arrow[ru] & & 0 & & & 
\end{tikzcd}
\] 
where  
$
\del_0 = \begin{bmatrix} 
e_1 e_2 e_3
\end{bmatrix}$, 
$\del_1 = \begin{bmatrix}
e_1 & e_2 & e_3
\end{bmatrix}$,
and 
$\del_2 = \begin{bmatrix}
e_1 & e_2 & 0 & e_3 & 0 & 0 \\
0 & e_1 & e_2 & 0 & e_3 & 0 \\
0 & 0 & 0 & e_1 & e_2 & e_3
\end{bmatrix}.
$

In general, the differential $\del_s$ in the Cartan resolution can be computed by indexing the columns of $\del_s$ with the degree-$s$ monomials in the $x_i$'s and the rows by the degree $s-1$ monomials in the $x_i$'s. Then, if column $i$ is indexed by a monomial $m$ and row $j$ is indexed by a monomial $m'$, the $(i,j)$th entry of $\del_s$ is $e_k$ if $m/m' = x_k$ if $m' \mid m$ and $0$ if $m \nmid m'$. In Example~\ref{ex:tateres}, the indexing monomials for the entries of the $\del_s$ are listed in graded reverse lexicographic order with $x_1 > x_2 > x_3$. 

\begin{remark} \label{rem:tateresformoreambient} 
Because $E$ is free over $\bbk\langle e_1, \ldots, e_m \rangle$ for $m \leq n$, extending scalars from $\bbk\langle e_1, \ldots, e_m \rangle$ to $E$ is faithfully flat. This means the Tate resolution $\bfT(E/\langle e_1, \ldots, e_m \rangle)$ has the same structure as the Tate resolution $\bfT(\bbk\langle e_1, \ldots, e_m\rangle/\langle e_1, \ldots, e_m \rangle)$ as a complex of $\bbk\langle e_1, \ldots, e_m\rangle$-modules. That is, the complex of $\bbk\langle e_1, \ldots, e_m\rangle$-modules $\bfT(\bbk)$ and the complex of $E$-modules $\bfT(\bbk\langle e_{m+1}, \ldots, e_n \rangle)$ have modules of the same rank and twists, and differentials with the same entries, regardless of the ambient ring. For example, the Tate resolution $\bfT(\bbk\langle e_4\rangle)$ over $\bbk \langle e_1, \ldots, e_4\rangle$ will ``look'' the same as the one shown in Example~\ref{ex:tateres}, with all $E$'s replaced by $\bbk\langle e_1, \ldots, e_4 \rangle$.
\end{remark}

\end{example}

\section{Resolutions of $\frakm^d$}

As before, let $S = \bbk[x_1, \ldots, x_n]$, where $\bbk$ is a field. Use $\frakm$ to denote the homogeneous maximal ideal $\langle x_1, \ldots, x_n \rangle$. We begin by exploring the possible rank sequences of subcomplexes of resolutions of $\frakm^d$, in particular as they are presented by the Koszul complex in the $d=1$ case and the Eagon--Northcott complex in the $d\geq 2$ case.

\subsection{The Koszul Complex}

\begin{definition}\label{def:koszul} 
The Koszul complex $\bfK(x_1, \ldots, x_m)$ is the graded exact complex 
\[ 
\bfK(x_1, \ldots, x_m): 0 \rightarrow S(-m) \xrightarrow{\del_m} \cdots \xrightarrow{\del_3} S(-2)^{\binom{m}{2}} \xrightarrow{\del_2} S(-1)^m \xrightarrow{\del_1} S^1 \rightarrow 0,
\] 
where we index basis elements of $K_d \coloneqq S(-d)^{\binom{m}{d}}$ by size $d$ subsets of $m$. For $T~=~\{i_1, \ldots, i_d\}$, the differential $\del_d$ acts on $e_T$ by $\del_d(e_T) = \sum_{j = 1}^{d} (-1)^{j} x_{i_j} e_{T \backslash i_j}$. 
\end{definition} 

\begin{example}\label{ex:koszul3}
The Koszul complex $\bfK(x_1,x_2,x_3)$ is given by 
\[
\bfK(x_1, x_2, x_3): 0 \rightarrow S(-3) \xrightarrow{\del_3} S(-2)^3 \xrightarrow{\del_2} S(-1)^3 \xrightarrow{\del_1} S \rightarrow 0
\]
where 
\[ 
\del_1 = 
\begin{tikzpicture}[anchor=base, baseline]
    \matrix [matrix of math nodes,left delimiter={[},right delimiter={]}] (d1)
        {x_1 & x_2 & x_3 \\};
    \draw [color=red] (d1-1-1.north west) rectangle (d1-1-2.south east);
\end{tikzpicture} 
\quad 
\del_2 = 
\begin{tikzpicture}[baseline]
    \matrix [matrix of math nodes,left delimiter={[},right delimiter={]}] (d2)
        {-x_2 & -x_3 & 0 \\ 
        x_1 & 0 & -x_3 \\ 
        0 & x_1 & x_2 \\};
    \draw [color=red] (d2-1-1.north west) rectangle (d2-2-1.south east);
\end{tikzpicture} 
\quad
\text{ and } 
\quad 
\del_3 = 
\begin{tikzpicture}[baseline]
    \matrix [matrix of math nodes,left delimiter={[},right delimiter={]}] (d3)
        {x_3 \\ -x_2 \\ x_1 \\};
\end{tikzpicture}. 
\]
\end{example}

Looking at the above example, we can immediately identify some subcomplexes of the Koszul complex. If $m \leq n$, the complex $\bfK(x_1, \ldots, x_m)$ is a subcomplex of $\bfK(x_1, \ldots, x_n)$ -- one can see the Koszul complex $\bfK(x_1,x_2)$ boxed in red in Example~\ref{ex:koszul3}. One can also simply truncate $\bfK(x_1, x_2, x_3)$ after two modules and omit the $S(-3)$ module at the end, or even omit $S(-3)$ and some summands of the $S(-2)^3$ in the next spot. This observation yields certain sequences that we can be sure must occur as rank sequences of subcomplexes of $\bfK(x_1, \ldots, x_m)$ but to obtain a more complete classification we can peer through the BGG lens and, in particular, use the following key fact.

\begin{fact}[see Example~7.6, \cite{eGeomOSyz}]
The linear complex $\bbL(E(-n))$ is (isomorphic to) the Koszul complex $\bfK(x_1, \ldots, x_n)$.
\end{fact}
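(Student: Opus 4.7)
The plan is to apply $\bbL$ explicitly to $E(-n)$ and identify the resulting complex, term by term, with $\bfK(x_1, \ldots, x_n)$.

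First, I would tabulate the graded pieces of $N := E(-n)$. Using the shift convention $M(a)_d = M_{d+a}$, we have $N_d = E_{d-n}$, which is nonzero precisely for $d = 0, 1, \ldots, n$. Since $E_{-k}$ has $\bbk$-basis indexed by $k$-subsets of $\{1, \ldots, n\}$ via $J = \{j_1 < \cdots < j_k\} \mapsto e_J = e_{j_1}\cdots e_{j_k}$, the space $N_d = E_{-(n-d)}$ has a basis indexed by $(n-d)$-subsets, or equivalently by their complements $T = \{1, \ldots, n\} \setminus J$, which are $d$-subsets. In particular $\dim_\bbk N_d = \binom{n}{d}$, and since a basis vector of $N_d$ lives in degree $d$, the summand $S \otimes N_d$ of $\bbL(N)$ is canonically $S(-d)^{\binom{n}{d}}$. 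Hence $\bbL(E(-n))$ has the shape
\[
0 \to S(-n) \to S(-(n-1))^n \to \cdots \to S(-1)^n \to S \to 0,
\]
which matches the shape of $\bfK(x_1, \ldots, x_n)$ module-by-module and twist-by-twist.

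Next, I would verify that the differentials agree under the basis correspondence $e_T \leftrightarrow e_{T^c}$. The Koszul differential sends $e_T$ (with $T = \{i_1 < \cdots < i_d\}$) to $\sum_{j=1}^d (-1)^{j} x_{i_j}\, e_{T \setminus i_j}$, while the $\bbL$-differential sends $1 \otimes e_{T^c}$ to $\sum_{i=1}^n x_i \otimes e_{T^c} e_i$. The product $e_{T^c} e_i$ vanishes unless $i \in T$, in which case it equals $\pm e_{T^c \cup \{i\}} = \pm e_{(T \setminus i)^c}$. Therefore both differentials send the basis element indexed by $T$ to a $\bbk$-linear combination of $x_i$ times basis elements indexed by $T \setminus \{i\}$ for $i \in T$, so the matrix supports agree exactly.

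The only remaining issue, and the main (though routine) bookkeeping obstacle, is to reconcile the signs. I would define a candidate isomorphism $\Phi(e_T) = \epsilon_T\, e_{T^c}$ for appropriate signs $\epsilon_T \in \{\pm 1\}$ and solve the compatibility relations $\Phi \circ \del_d^{\bfK} = \del_d^{\bbL} \circ \Phi$ recursively in $d$. Explicitly, the sign appearing in $e_{T^c} e_i$ is $(-1)^{\#\{j \in T^c \,:\, j > i\}}$, so the required relations collapse to a manageable combinatorial identity on subsets of $\{1, \ldots, n\}$; a choice such as $\epsilon_T = (-1)^{\binom{d}{2}}$ (or a similarly index-based sign) can be checked to work. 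A more conceptual alternative is to invoke the canonical pairing $\Lambda^d W \otimes \Lambda^{n-d} W \to \Lambda^n W \cong \bbk$, where $W$ is the span of $e_1, \ldots, e_n$, which yields a basis-free isomorphism of the two complexes and sidesteps the explicit sign computation entirely.
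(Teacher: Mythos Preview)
Your approach is correct. The paper does not actually prove this statement; it is recorded as a \emph{Fact} and attributed to \cite[Example~7.6]{eGeomOSyz} without further argument. Your direct computation---identifying $N_d = E(-n)_d = E_{-(n-d)}$ with $\bbk^{\binom{n}{d}}$ via the basis of $(n-d)$-fold products, matching $S\otimes N_d$ with $S(-d)^{\binom{n}{d}}$, and then checking that the $\bbL$-differential and the Koszul differential have the same support under the complement bijection $T \leftrightarrow T^c$---is the standard verification and is exactly what one finds in Eisenbud's treatment.

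One small caution: the specific sign $\epsilon_T = (-1)^{\binom{d}{2}}$ you propose depends only on $|T|$ and not on the particular subset $T$, and such a uniform sign will generally not reconcile the two differentials (the sign in $e_{T^c}e_i$ depends on the position of $i$ relative to $T^c$, not just on $|T|$). You already anticipate this by hedging and by offering the conceptual alternative via the pairing $\Lambda^d W \otimes \Lambda^{n-d} W \to \Lambda^n W \cong \bbk$; that route is the clean one and does dispose of the sign bookkeeping entirely. If you want an explicit formula, the correct choice is $\epsilon_T = $ the sign of the permutation taking $(1,\ldots,n)$ to the concatenation of $T^c$ (in increasing order) followed by $T$ (in increasing order), which is exactly what the pairing produces.
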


The BGG correspondence thus tells us that subcomplexes of the Koszul complex are in correspondence with submodules of the exterior algebra $E$ twisted by $(-n)$, so our question about the possible rank sequences of subcomplexes of $\bfK$ is transformed into a question about the possible Hilbert functions of submodules of $E$ itself (after the appropriate twist). This perspective immediately reveals that subcomplexes of $\bfK$ are less restricted than one might guess from the $n=3$ case. Indeed, Example~\ref{ex:1441} shows that we can obtain a subcomplex of $\bfK(x_1, \ldots, x_4)$ whose rank sequence is $(1,4,4,1,0)$, which is not the rank sequence of a smaller Koszul complex and furthermore cannot be obtained by truncating free summands from the tail of $K$.

This observation also underscores the complexity of the structural question of classifying all subcomplexes in the case of the Koszul complex. Such a task would be equivalent to classifying all ideals in $E$. Though the feasibility of such classification is yet unknown, it is worth noting that the parallel question of classifying ideals in $S$ is impossible by Vakil's Murphy's Law \cite{vMurphysLawIAlgGeoBadBehavedDefSpaces}.

By work of Aramova--Herzog--Hibi \cite[Theorem~4.1]{ahhGotzmannThFExteriorAlgACombinatorics}, possible Hilbert sequences of submodules of the exterior algebra are exactly those corresponding to $f$-vectors of simplicial complexes as described by the Kruskal--Katona Theorem. We can use these results to characterize the possible rank sequences for a subcomplex of the Koszul complex with the following notation.

\begin{definition}\label{def:macaulayexpansion}
If $a$ is a positive integer, then, for every positive integer $i$, $a$ has a unique \emph{Macaulay expansion}
\[
a = \binom{a_i}{i} + \binom{a_{i-1}}{i-1}+\cdots +\binom{a_j}{j},
\]
where $a_i>a_{i-1}>\cdots>a_j\geq j\geq 1$. Define 
\[
a^{(i)}:= \binom{a_i}{i+1} + \binom{a_{i-1}}{i}+\cdots +\binom{a_j}{j+1}.
\]
\end{definition}

\begin{theorem}\label{thm:basickoszulranks}
A non-negative integer sequence $(r_0, r_1, \ldots, r_n)$ is in $\RS(\bfK(x_1, \ldots, x_n))$ if and only if it is the all zeroes sequence or if $r_0=1$ and $r$ satisfies 
\[
0\leq r_{i+1}\leq r_i^{(i)} \text{ for } 1 \leq i \leq n-1.
\]
\end{theorem}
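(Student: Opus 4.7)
My plan is to use the BGG correspondence to translate the question into one about Hilbert functions of submodules of the exterior algebra, and then to invoke the Aramova--Herzog--Hibi theorem to characterize those Hilbert functions.

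Concretely, since the Koszul complex $\bfK(x_1,\ldots,x_n)$ is isomorphic to $\bbL(E(-n))$ by the fact quoted just above the theorem, the bijection on objects induced by $\bbL$ shows that subcomplexes of $\bfK(x_1,\ldots,x_n)$ correspond exactly to $E$-submodules of $E(-n)$, equivalently to shifted ideals of $E$. Under this correspondence, Remark~\ref{rmk:hfrs} expresses the rank sequence $(r_0,\ldots,r_n)$ of a subcomplex as a reindexing of the Hilbert function of the associated submodule, so the problem reduces to characterizing the possible Hilbert sequences of graded $E$-submodules of $E(-n)$.

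This latter characterization is precisely what Aramova--Herzog--Hibi, combined with Kruskal--Katona, provides: Hilbert sequences of graded $E$-submodules are exactly the $f$-vectors of simplicial complexes on $n$ vertices, and these are in turn characterized by the normalization $r_0 = 1$ (with the all-zero sequence arising from the zero submodule) together with the bounds $0 \leq r_{i+1} \leq r_i^{(i)}$ for $1 \leq i \leq n-1$. Both implications of the theorem then follow at once: the rank sequence of any subcomplex of $\bfK(x_1,\ldots,x_n)$ must satisfy these inequalities, and conversely, any sequence satisfying them is realizable as the $f$-vector of some simplicial complex (for instance a lex-compressed one), whose corresponding submodule of $E(-n)$ maps under $\bbL$ to a subcomplex with the prescribed rank sequence.

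The main technical hurdle I anticipate is bookkeeping: $E$ is negatively graded, submodules live in the twisted copy $E(-n)$, and the Kruskal--Katona inequalities are typically stated on $f$-vectors of simplicial complexes---equivalently on Hilbert functions of quotients of $E$---rather than on Hilbert functions of ideals directly. Aligning these indexings and reversals so that the bound emerges in exactly the form $r_{i+1} \leq r_i^{(i)}$, possibly via the Gorenstein duality $I \leftrightarrow \operatorname{Ann}_E(I)$ on $E$ which swaps Hilbert functions of ideals and quotients, is the delicate part of the argument; once this is sorted out the rest of the proof is a direct application of the quoted results.
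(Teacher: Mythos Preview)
Your proposal is correct and follows essentially the same route as the paper: translate subcomplexes of $\bfK(x_1,\ldots,x_n)=\bbL(E(-n))$ into ideals of $E$ via BGG, then invoke the Aramova--Herzog--Hibi version of Kruskal--Katona, using the Gorenstein duality $I \leftrightarrow (0:I)$ on $E$ to pass between Hilbert functions of ideals and of quotients. The paper makes this last step slightly more explicit by citing \cite[Corollary~5.3]{ahhGotzmannThFExteriorAlgACombinatorics} to get $h(E/(0:I)) = \rs(\bbL(I))$ directly, but this is exactly the bookkeeping you anticipated.
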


\begin{proof}
If $r$ is the sequence of all zeroes, it is the rank sequence of the zero complex, which is a subcomplex of any complex.

Using Corollary~5.3 from \cite{ahhGotzmannThFExteriorAlgACombinatorics} and Remark~\ref{rmk:hfrs}, we see that $h(E/(0:I)) = \rs(\bbL(I))$. Because every ideal in $E$ satisfies $0:(0:I) = I$ and can therefore be recognized as an annihilator, classifying Hilbert sequences $h(E/I)$ is equivalent to classifying Hilbert sequences $h(E/(0:I))$. Combining these two sentences, we see that classifying rank sequences $\rs(\bbL(I))$ is equivalent to classifying rank sequences $h(E/I)$. 

By \cite[Theorem~4.1]{ahhGotzmannThFExteriorAlgACombinatorics}, a non-negative integer sequence $h=(1,h_1, \ldots, h_n)$ is the Hilbert sequence of a module $E/I$ if and only if $0 \leq h_{i+1} \leq h_i^{(i)}$ for all $1 \leq i \leq n-1$. This translates directly to the set $\RS(\bfK(x_1, \ldots, x_n))$, and our theorem is proven.
\end{proof}

\begin{remark}
In an analogous way, Macaulay's Theorem (c.f. \cite[Theorem~4.2.14]{bhCMRings}) characterizes the ranks of subcomplexes of the Cartan resolution of $\bbk$ over $E$.
\end{remark}

\subsection{The Eagon--Northcott Complex}
The Eagon--Northcott complex \cite{enIdDefBMatAACComplexAssocWThem} 
plays the same role for determinantal ideals that a Koszul complex plays for a sequence of ring elements. 
We provide a brief presentation here that describes the complex for $S$-modules; more details can be found in \cite[Appendix~A2H]{eGeomOSyz}. Throughout, we will choose bases for our free modules so that we can represent these maps as matrices. 

\begin{definition}
Let $F = S^f$ and $G=S^g$, with $g \leq f$, and $\alpha: F \rightarrow G$ a map represented by a $g \times f$ matrix $A$ with respect to bases $\{e_1, \ldots, e_f\}$ of $F$ and $\{\varepsilon_1, \ldots, \varepsilon_g\}$ of $G$. Then the Eagon--Northcott complex of the map $\alpha$ is the complex
\[ 
\bfEN(\alpha): 0 \rightarrow EN_{f-g+1} \xrightarrow{d_{f-g+1}} EN_{f-g} \xrightarrow{d_{f-g}} \cdots \xrightarrow{d_3} EN_2 \xrightarrow{d_2} EN_1 \xrightarrow{\Lambda^g \alpha} \Lambda^g G
\]
where $EN_{k+1} = (\Sym_k G)^* \otimes \Lambda^{g+k} F$ 
and $d_{k+1}: (\Sym_k G)^* \otimes \Lambda^{g+k} F \rightarrow (\Sym_{k-1} G)^* \otimes \Lambda^{g+k-1} F$ is the map
\begin{multline*} 
(\varepsilon_{1}^{p_1} \cdots \varepsilon_{g}^{p_g})^* \otimes e_{s_1} \wedge \cdots \wedge e_{s_{g+k}}
\mapsto \\
\sum\limits_{i=1}^{g+k} (-1)^{i-1}
\left[ \sum\limits_{j=1}^{g} A_{j,s_i} (\varepsilon_1^{p_1} \cdots \varepsilon_j^{p_j-1} \cdots \varepsilon_g^{p_g})^* \right] 
\otimes e_{s_1} \wedge \cdots \wedge \widehat{e_{s_i}} \wedge \cdots \wedge e_{s_{g+k}}.
\end{multline*}
for $k \geq 1$, where $p_1 + \ldots + p_g = k$ and we adopt the convention that $\varepsilon_j^p = 0$ if $p < 0$.
\end{definition}

Note that, if we represent $\alpha$ by the matrix $A$, then $A_{j,s} = \varepsilon_j^*(\alpha(e_s))$, and that using a different basis to express $A$ will give an isomorphic complex.

\begin{example}\label{ex:eagonnorthcottforn3d2}
For this example, let $S= \bbk[x_1, x_2, x_3]$. Consider the example $\alpha:~S^4~\rightarrow~S^2$ represented by the matrix 
$A = \begin{bmatrix} 
x_1 & x_2 & x_3 & 0 \\
0 & x_1 & x_2 & x_3 \\
\end{bmatrix}.$

Making the appropriate identifications for each module, we can see that 
$\bfEN(\alpha)$ is the resulting graded complex 
\[
\bfEN(\alpha): 0 \rightarrow S(-4)^3 \xrightarrow{d_3} S(-3)^8 \xrightarrow{d_2} S(-2)^6 \xrightarrow{d_1} S.
\]

Note that the ideal of maximal minors in Example~\ref{ex:eagonnorthcottforn3d2} is the ideal $\langle x_1, x_2, x_3 \rangle^2$. In general, the Eagon--Northcott complex minimally resolves any power of the maximal ideal $\frakm^d$ by constructing the complex for the $d \times (n+d-1)$ matrix 
\[
M^{n,d} = \begin{bmatrix}
x_1 & x_2 & \ldots & x_n & 0 & \ldots & \ldots & 0 \\ 
0 & x_1 & x_2 & \ldots & x_n & 0 & \ldots & 0 \\ 
\vdots & \ddots & \ddots & \ddots & & \ddots & \ddots & \vdots \\
0 & \ldots & 0 & x_1 & x_2 & \ldots & x_n & 0 \\ 
0 & \ldots & \ldots & 0 & x_1 & x_2 & \ldots & x_n \\
\end{bmatrix}.
\]

\end{example}

With this notation, the matrix $A$ in Example~\ref{ex:eagonnorthcottforn3d2} is $M^{3,2}$. 

Our eventual goal is to understand, up to rank sequence, the possible subcomplexes of the Eagon--Northcott complex in a similar way as we did with the Koszul complex. We will see that these sequences are much more difficult to classify completely, but that we can still find restrictions that narrow down the possibilities. To do this, we will once again leverage the BGG correspondence in order to understand subcomplexes of Eagon--Northcott complexes. In order to do this, we must restrict ourselves to the linear maps -- and therefore the degree $d$ strand -- of the Eagon--Northcott complex. Note however that with the exception of the first map, the Eagon--Northcott complex is always linear, and any subcomplex of the degree $d$ strand will extend to a subcomplex of the entire Eagon--Northcott complex. 

We define the complex $L_{n,d}$ to be the resolution of the ideal $\langle x_1, \ldots, x_n \rangle^d$ as an $S$-module as presented by $\bfEN(M^{n,d})$ and note that $L_{n,d}$ is linear. Indeed, for $d\geq 2$, $L_{n,d}$ is the degree--$d$ strand of $\bfEN(M^{n,d})$ shifted by one homological degree, while for $d=1$ the entire complex is linear, so $L_{n,1}$ is not the entire linear strand since it is missing the first map. This complex corresponds to a specific $E$-module $N_{n,d}$ under the BGG correspondence, that is, $L_{n,d} =\mathbb{L}(N_{n,d})[-n+1]$. Therefore, classifying subcomplexes of $L_{n,d}$ corresponds to understanding the $E$-submodules of $N_{n,d}(-n+1)$. 

\begin{remark}\label{rmk:linearstrand}
Given an subcomplex $\bfF$ of $L_{n,d}$, we can always extend to a subcomplex of $\bfEN(M^{n,d})$. In fact, since the first term of $\bfEN(M^{n,d})$ is $S^1$, we can extend $\bfF$ by either $0$ or by $S^1$ to obtain a subcomplex of $\bfEN(M^{n,d})$. At the level of rank sequences, this means that $r\in \RS(\bfEN(M^{n,d}))$ has the form $(0,r')$ or $(1,r')$ for $r'\in \RS(L_{n,d})$.
\end{remark}

\begin{proposition}
The module $N_{n,d}$ is the cokernel of $\del_{d-1}^T$ in the Tate resolution $\bfT(\bbk)$. 
\end{proposition}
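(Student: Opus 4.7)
The plan is to apply the Reciprocity Theorem~\ref{thm:reciprocitythm} via an identification of the right tail of $\bfT(\bbk)$ with $\bbR(\frakm^d)$, up to an internal twist.

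First I would extract an injective resolution of $N := \coker(\del_{d-1}^T)$ from the Tate complex. By exactness of $\bfT(\bbk)$ at $I_{d-1}$, $\Ima(\del_{d-1}^T) = \ker(\del_d^T)$, so $N \cong \Ima(\del_d^T) \subseteq I_d$. Since $E$ is a Frobenius algebra, every free $I_s$ is injective, and the exact tail of $\bfT(\bbk)$ starting at $I_d$ assembles into an injective resolution
\[
0 \to N \to I_d \xrightarrow{\del_{d+1}^T} I_{d+1} \xrightarrow{\del_{d+2}^T} I_{d+2} \to \cdots.
\]

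Next I would identify this tail with $\bbR(\frakm^d)$ up to an internal twist of $-n$. The ranks line up, since $I_s$ has rank $\binom{n+s-1}{s} = \binom{n+s-1}{n-1} = \dim_\bbk S_s = \dim_\bbk(\frakm^d)_s$ for $s \geq d$, matching $\bbR(\frakm^d)_s = E \otimes_\bbk S_s$. The twists differ by the constant $-n$ coming from the socle shift in the Cartan construction: $I_s$ is generated in degree $n+s$ while $\bbR(\frakm^d)_s$ is generated in degree $s$. The differentials coincide, as both $\del_s^T$ and the $\bbR$-differential are given by the same contraction formula $1 \otimes g \mapsto \sum_i e_i \otimes g x_i$.

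Finally I would apply Reciprocity. Absorbing the internal $(-n)$ twist via the shift identity $\bbL(N(n)) = \bbL(N)[n]$ recorded in the preceding remarks, Theorem~\ref{thm:reciprocitythm} applied to the injective resolution $N(n) \to \bbR(\frakm^d)$ yields a free resolution $\bbL(N(n)) \to \frakm^d$. Since $L_{n,d}$ is the unique minimal free resolution of $\frakm^d$, this identifies $\bbL(N)$ as $L_{n,d}$ up to the homological shift appearing in the statement, forcing $N = N_{n,d}$.

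The principal obstacle will be the twist-bookkeeping in the second step, namely verifying that the Cartan differentials truly match those of $\bbR$ under the degree-$(-n)$ identification and that the resulting homological shift reconciles with the $[-n+1]$ prescribed in the proposition rather than an off-by-one neighbor.
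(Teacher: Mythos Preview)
Your proposal is correct and follows essentially the same route as the paper: identify the tail $I_d \to I_{d+1} \to \cdots$ of $\bfT(\bbk)$ with $\bbR(\frakm^d)$, then invoke the Reciprocity Theorem to match this with the defining equation $L_{n,d} = \bbL(N_{n,d})[-n+1]$. The only difference is directional: the paper starts from the definition of $N_{n,d}$ and deduces that it equals $\ker\del_{d+1}^T = \coker\del_{d-1}^T$, whereas you start from $N = \coker\del_{d-1}^T$ and deduce that $\bbL(N)$ resolves $\frakm^d$; and you are more explicit about the internal twist by $n$ between the Tate tail and $\bbR(\frakm^d)$, which the paper suppresses.
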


\begin{proof}
To obtain a presentation for $N_{n,d}$, we can appeal to the Reciprocity Theorem (Theorem~\ref{thm:reciprocitythm}) and the Tate resolution $\bfT(\bbk)$. Because $\bbL(N_{n,d}) \rightarrow \frakm^d$ is a free resolution, $N_{n,d} \rightarrow \bbR(\frakm^d)$ is an injective resolution, so $N_{n,d}$ is the kernel of $\bbR(\frakm^d)$. 

The complex $\bbR(\frakm^d)$ is known and, in fact, fits nicely into the Tate complex $\bfT(\bbk)$: if $\bbk \rightarrow I_0 \xrightarrow{\del_1^T} I_1 \xrightarrow{\del_2^T} \cdots$ is the injective resolution of $\bbk$ as an $E$-module,
then $\bbR(\frakm^d)$ is the truncation $0 \rightarrow I_d \xrightarrow{\del_{d+1}^T} I_{d+1} \xrightarrow{\del_{d+2}^T} \cdots$. This means that $N_{n,d} = \ker \del_{d+1}^T$, which we can rewrite by the Tate resolution as $\coker \del_{d-1}^T$, since the Tate resolution is the unique exact way to extend $\bbR(\frakm^d)$ to the left. 
\end{proof}

\begin{example}
We expound upon the case shown in Example~\ref{ex:tateres}, which considers the matrix $M^{3,2}$ whose minors give the ideal $\langle x_1, x_2, x_3 \rangle^2 \subseteq S = \bbk[x_1, x_2, x_3]$. The complex $L_{3,2}$ corresponds to the $E$-module $N_{3,2} = \coker \del_1^T$, where $\del_1^T$ is the map in the Tate resolution given in Example~\ref{ex:tateres}. 
\end{example}

In this way, our search for possible rank sequences of subcomplexes of $L_{n,d}$ is translated to a search for possible Hilbert functions of submodules of $N_{n,d}$ (again with appropriate twist). We denote this set of the possible Hilbert functions of submodules of $N_{n,d}$ by $\HF(N_{n,d})$ and prove this set satisfies certain constraints. Some persnickety bookkeeping is necessary proceeding. 

\begin{remark}
The module $N_{m,d}$ has the same presentation matrix when viewed as a $\bbk \langle e_1, \ldots, e_m \rangle$-module and as a $\bbk \langle e_1, \ldots, e_n \rangle$-module. This follows from combining the argument for the presentation of $N_{n,d}$ and Remark~\ref{rem:tateresformoreambient}. However, the Hilbert function is \textit{not} the same when we consider $N_{m,d}$ as a $\bbk \langle e_1, \ldots, e_m \rangle$-module and as a $\bbk \langle e_1, \ldots, e_n \rangle$-module. For example, as a $\bbk\langle e_1, e_2 \rangle$-module, $N_{1,1} = \coker \begin{bmatrix} e_1 \end{bmatrix}$ has Hilbert function $(1,1,0)$. This differs from the Hilbert function of $\coker \begin{bmatrix} e_1 \end{bmatrix}$ when viewed as a $\bbk\langle e_1 \rangle$-module, which is simply $(1,0,0)$. Therefore, in general, the set $\HF(N_{m,d})$ will vary, depending on the ambient exterior algebra, so we must introduce more precise notation. We will continue to let $E = \bbk\langle e_1, \ldots, e_n\rangle$ and $N_{n,d}$ for the module where $L_{n,d} = \bbL(N_{n,d}(-n+1))$. If we are considering the module $N_{m,d}$ as an $E$-module, we will use the notation $\overline{N_{m,d}}$. Note that $\overline{N_{m,d}} = N_{m,d} \otimes \bbk\langle e_{m+1}, \ldots, e_{n} \rangle$. 
\end{remark}

\begin{theorem}\label{thm:HFNndcontainment}
The set of possible Hilbert functions of submodules of $N_{n,d}$ is restricted by the following containment: 
\[\HF(N_{n,d}) \subseteq \HF(N_{n,d-1}) + \HF(\overline{N_{n-1,d}}). \]
\end{theorem}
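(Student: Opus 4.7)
The plan is to produce a short exact sequence of graded $E$-modules
\[
0 \to N_{n,d-1}(-1) \to N_{n,d} \to \overline{N_{n-1,d}} \to 0
\]
and then split the Hilbert function of any submodule $P \subseteq N_{n,d}$ into contributions from the intersection with the kernel and the image in the quotient. This reduces the inclusion of Hilbert functions to standard additivity of dimensions on short exact sequences.

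To build this SES, the strategy is to exploit the natural decomposition of $\frakm^d$ that isolates the variable $x_n$. Concretely, I would begin with the short exact sequence of graded $S$-modules
\[
0 \to x_n\frakm^{d-1} \to \frakm^d \to \frakm^d/(x_n\frakm^{d-1}) \to 0,
\]
in which the left term is isomorphic to $\frakm^{d-1}(-1)$ via multiplication by the nonzerodivisor $x_n$, and the quotient on the right is annihilated by $x_n$ and hence naturally isomorphic to $\frakm_{n-1}^d$ as an $S/(x_n)$-module. Applying the exact functor $\bbR$ then gives a short exact sequence of complexes of free $E$-modules. By the Reciprocity Theorem (Theorem~\ref{thm:reciprocitythm}) each of these complexes is an injective resolution of its kernel, so passing to $H^0$ in the associated long exact sequence collapses the sequence of complexes into the desired SES of $E$-modules. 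The identification of the rightmost kernel with $\overline{N_{n-1,d}}$ follows from the key observation that if an $S$-module $M$ is annihilated by $x_n$, then the differential of $\bbR(M)$ has no $e_n$ contributions, so $\bbR(M) \cong \bbR'(M)\otimes_{E'} E$ where $\bbR'$ denotes the BGG functor over $E' = \bbk\langle e_1,\ldots,e_{n-1}\rangle$; in particular its kernel is $N_{n-1,d}\otimes_{E'} E = \overline{N_{n-1,d}}$.

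With the SES in hand, the remainder is routine. For a submodule $P \subseteq N_{n,d}$, set $M_1 = P \cap N_{n,d-1}(-1)$ and $M_2 = \pi(P) \subseteq \overline{N_{n-1,d}}$, where $\pi$ is the quotient map. Then $0 \to M_1 \to P \to M_2 \to 0$ is a short exact sequence of $E$-modules, whence $h(P) = h(M_1) + h(M_2) \in \HF(N_{n,d-1}) + \HF(\overline{N_{n-1,d}})$.

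The main subtlety I expect to encounter is the bookkeeping of the $(-1)$ twist on $N_{n,d-1}$: submodules of $N_{n,d-1}(-1)$ are the same data as submodules of $N_{n,d-1}$, but their Hilbert functions differ by a re-indexing, so one must verify this shift is compatible with the convention on $\HF$ implicit in the theorem's statement (and if needed, re-state the conclusion accordingly). A secondary technical item is confirming that $\bbR$ applied to each of $\frakm^d$, $\frakm^{d-1}(-1)$, and $\frakm_{n-1}^d$ (viewed as $S/(x_n)$-module) is truly an injective resolution in the relevant range; this should follow from each of these modules admitting a minimal free resolution whose linear strand appears in the appropriate position, which in turn hinges on the Eagon--Northcott / Koszul structure already in play.
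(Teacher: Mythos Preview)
Your proposal is correct and follows essentially the same route as the paper: both build the short exact sequence $0 \to \frakm^{d-1}(-1) \to \frakm^d \to \frakm_{n-1}^d \to 0$ (the paper writes it as $0 \to S_{\geq d-1}(-1) \to S_{\geq d} \to S'_{\geq d} \to 0$), apply $\bbR$, pass to kernels to obtain the short exact sequence of $E$-modules, and then decompose the Hilbert function of any submodule additively via the induced short exact sequence. Your flagged concern about the $(-1)$ twist is legitimate---the paper simply suppresses it in the statement---and your explanation of why the right-hand kernel is $\overline{N_{n-1,d}}$ (via $\bbR(M)\cong \bbR'(M)\otimes_{E'}E$ when $x_n M = 0$) is actually more explicit than what the paper provides.
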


\begin{proof}
First we prove that 
\[ 
0 \rightarrow N_{n,d-1} \rightarrow N_{n,d} \rightarrow \overline{N_{n-1,d}} \rightarrow 0 
\]
is a short exact sequence. Note that $\frakm^d$ is the $S$-module $S_{\geq d}$, from which we get the exact sequence of $S$-modules
\[ 
0 \rightarrow S_{\geq d-1}(-1) \rightarrow S_{\geq d} \rightarrow S'_{\geq d} \rightarrow 0,
\]
where $S' = \bbk[x_1, \ldots, x_{n-1}]$ is an $S$-module in the usual way: $x_n \cdot f = 0$ for any $f \in S'$. 

Because $\bbL$ preserves exactness, this means that 
\[
0 \rightarrow \bbR(S_{\geq d-1}(-1)) \rightarrow \bbR(S_{\geq d}) \rightarrow \bbR(S'_{\geq d}) \rightarrow 0 
\]
is a short exact complex of linear complexes of $E$-modules. In particular, we know what the kernel of each complex is: exactly the corresponding module $N$. Therefore 
\[
0 \rightarrow N_{n,d-1} \rightarrow N_{n,d} \rightarrow \overline{N_{n-1,d}} \rightarrow 0
\]
is indeed a short exact sequence of $E$-modules. 

Now suppose that $N \subseteq N_{n,d}$ is a submodule with Hilbert function $h(N)$. The image of $N$ in $\overline{N_{n-1,d}}$ is also a submodule, which we will denote $N'$. Let $N''$ be the kernel of the induced map $N\to N'$. It is a submodule of $N_{n,d-1}$, so we have a short exact sequence 
\[
0\to N''\to N \to N' \to 0.
\]

Because Hilbert functions sum over short exact sequences, we have $h(N) = h(N') + h(N'')$, so the Hilbert function of $N\subseteq N_{n,d}$ is realized as a sum of Hilbert functions of submodules of $\overline{N_{n-1,d}}$ and $N_{n,d-1}$. Thus we see that 
\[
\HF(N_{n,d})\subset \HF(\overline{N_{n-1,d}}) + \HF(N_{n,d-1}).
\]
\end{proof}
Note that the containment in Theorem~ \ref{thm:HFNndcontainment} is not an equality. We have shown that any Hilbert function of a submodule of $N_{n,d}$ may be realized as a sum of Hilbert functions of submodules of $\overline{N_{n-1,d}}$ and $N_{n,d-1}$, but there may be submodules of $\overline{N_{n-1,d}}$ and $N_{n, d-1}$ the sum of whose Hilbert functions is not the Hilbert function of a submodule of $N_{n,d}$. Indeed, the following example shows that the containment is strict in even a very small case.

\begin{example}\label{ex:N21}
For this example, we will use $E = \bbk\langle e_1, e_2 \rangle$ and consider the $E$-module $N_{2,2}$. From above, we have 
\[
\HF(N_{2,2}) \subseteq \HF(\overline{N_{1,2}}) + \HF(N_{2,1})
\]

Recall that $N_{2,2} = \coker\begin{bmatrix} e_1\\e_2\end{bmatrix}$, $\overline{N_{1,2}} = \coker\begin{bmatrix} e_1\end{bmatrix}$, and $N_{2,1} = \coker\begin{bmatrix} e_1e_2\end{bmatrix}$. The module $\overline{N_{1,2}}$ has Hilbert function $(1,1,0)$, while the submodule $0 \subset N_{2,1}$ has Hilbert function $(0,0,0)$, so we get the sum $(1,1,0)+(0,0,0)$ as a potential Hilbert function for a submodule of $N_{2,2}$. However, there is no submodule of $N_{2,2}$ with Hilbert function $(1,1,0)$ by the following argument. 

The module $N_{2,2}$ has $2$ generators in degree $0$, which we will call $\alpha$ and $\beta$. In degree $-1$ we have $e_1\alpha, e_1\beta, $and $e_2\alpha$, with $e_1\alpha = -e_2\beta$. Suppose we have a submodule $N\subset N_{2,1}$ with one generator in degree $0$. If we denote this degree $0$ generator of $N$ by $\zeta$, then we have that $\zeta = a\alpha + b\beta$ for some $a,b\in \bbk$. This gives us $2$ elements in degree $-1$: $e_1\zeta = ae_1\alpha + be_2\beta$ and $e_2\zeta = ae_2\alpha+be_2\beta = (ae_2-be_1)\alpha$. We can see that these are linearly independent since the only relation in $N$ is the relation $e_1\alpha = -e_2\beta$, so the Hilbert function of $N$ cannot be $(1,1,0)$.

\end{example}

Our goal now will be to restate Theorem~\ref{thm:HFNndcontainment} as a statement about $\RS(L_{n,d})$ in terms of complexes for which we have a complete characterization of possible rank sequences, namely Koszul complexes. We first introduce some helpful notation: for a nonnegative integer $m$, we will write $E_{(m)} = E/\langle e_{m+1}, \ldots, e_n\rangle$. Note that $E_{(m)} \cong \bbk$ when $m=0$. Furthermore, we will write $I_i$ to refer to the ideal generated by the single degree $-i$ monomial $e_1e_2\cdots e_i$ and we will make the convention that $I_0$ is the unit ideal. We will write $\bfK_{(m)}$ to mean the Koszul complex on $m$ variables for $m\leq n$.

\begin{remark}\label{rmk:twist}
As a $\bbk$-module, $E_{(m)}$ is simply the exterior algebra on $m$ variables, but since we are considering everything over $E$, we have that $\bbL(E_{(m)}) = \bfK_{(m)}[-n+m]$, that is to say, the $i\th$ free module in the complex $\bbL(E_{(m)})$ is the $(i-n+m)\th$ free module in the Koszul complex on $m$ variables.
\end{remark}

\begin{lemma}\label{lem:N1jequalsEnminus1}
For $1\leq j\leq n$,  we have the equality of sets
\[
\HF(\overline{N_{1,d}}) = \HF(E_{(n-1)}). 
\]
\end{lemma}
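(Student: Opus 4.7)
The plan is to identify $\overline{N_{1,d}}$ with $E_{(n-1)}$ up to a graded $E$-module isomorphism, after which equality of the Hilbert-function sets is immediate. The argument proceeds in three brief steps.

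First, I would compute the presentation of $N_{1,d}$ as a $\bbk\langle e_1\rangle$-module using the proposition that $N_{n,d} = \coker \partial_{d-1}^T$ in $\bfT(\bbk)$. For $n=1$ the Cartan resolution of $\bbk$ over $\bbk\langle e_1\rangle$ is the $2$-periodic complex $\cdots \xrightarrow{e_1} \bbk\langle e_1\rangle \xrightarrow{e_1} \bbk\langle e_1\rangle \to \bbk \to 0$, and its dual is an injective resolution of the same form, so every differential in $\bfT(\bbk)$ is multiplication by $e_1$. Hence $N_{1,d}$ has presentation matrix $[e_1]$ for every $d \geq 1$, and by Remark~\ref{rem:tateresformoreambient} the same presentation matrix describes $\overline{N_{1,d}}$ as an $E$-module. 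Thus $\overline{N_{1,d}} \cong E/(e_1)$ as graded $E$-modules.

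Second, the transposition $e_1 \leftrightarrow e_n$ extends to a degree-preserving $\bbk$-algebra automorphism of $E$, which induces a graded $E$-module isomorphism $E/(e_1) \cong E/(e_n) = E_{(n-1)}$. Since this automorphism preserves degree, it gives a bijection between submodules of $E/(e_1)$ and those of $E_{(n-1)}$ under which corresponding submodules share the same Hilbert function. Combining with the first step yields $\HF(\overline{N_{1,d}}) = \HF(E_{(n-1)})$.

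I expect no significant obstacles, since the proof is essentially \emph{identify the module, then apply a symmetry.} The only care needed is in tracking graded degrees so that $\overline{N_{1,d}} \cong E/(e_1)$ holds on the nose rather than up to shift; this follows from the presentation-matrix convention illustrated in Example~\ref{ex:N21}, where $\overline{N_{1,2}}$ is displayed as $\coker[e_1]$ with generator in degree $0$ and Hilbert function $(1,1,0)$, matching that of $E_{(n-1)}$ for $n=2$ exactly.
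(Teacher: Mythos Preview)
Your proposal is correct and follows essentially the same approach as the paper: both arguments identify $\overline{N_{1,d}}$ explicitly (as $E/\langle e_1\rangle$, equivalently $\bbk\otimes\bbk\langle e_2,\ldots,e_n\rangle$) and then observe the isomorphism with $E_{(n-1)}$. The paper is terser---it simply asserts $N_{1,d}=\bbk\langle e_1\rangle/\langle e_1\rangle=\bbk$ and hence $\overline{N_{1,d}}\cong E_{(n-1)}$---while you spell out both the Tate-resolution computation and the variable-swapping automorphism needed to pass from $E/\langle e_1\rangle$ to $E/\langle e_n\rangle=E_{(n-1)}$; this extra care is a virtue, since the paper's $\cong$ does implicitly rely on that automorphism.
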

\begin{proof}
We will show in fact that $\overline{N_{1,d}} \cong E_{(n-1)}$. First observe that $N_{1,d} = \bbk\langle e_1 \rangle / \langle e_1 \rangle = \bbk$. Therefore $\overline{N_{1,d}} = \bbk\otimes \bbk \langle e_2, \ldots, e_n \rangle \cong E_{(n-1)}$.
\end{proof}

\begin{lemma}\label{lem:hfnnd} 
For $n,d\geq 2$, the Hilbert functions of submodules of $N_{n,d}$ are restricted by the following containment:
\[
\HF(N_{n,d}) \subseteq \sum_{i=1}^n\binom{n+d-2-i}{n-i}\HF(\overline{N_{i,1}}).
\]
\end{lemma}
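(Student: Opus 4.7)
The plan is to iterate the two-term containment of Theorem~\ref{thm:HFNndcontainment} until every module appearing on the right-hand side has second index equal to $1$, and then identify the combinatorial coefficients that accumulate via Pascal's rule.

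First I would establish a generalization of the short exact sequence from the proof of Theorem~\ref{thm:HFNndcontainment}, namely that for each $2\leq m\leq n$ and $d\geq 2$,
\[
0 \to \overline{N_{m,d-1}} \to \overline{N_{m,d}} \to \overline{N_{m-1,d}} \to 0
\]
is a short exact sequence of $E$-modules. The argument is essentially identical to that of Theorem~\ref{thm:HFNndcontainment}, carried out in the smaller pair $\bbk[x_1,\ldots,x_m]$ and $\bbk\langle e_1,\ldots,e_m\rangle$, starting from the $S$-module sequence $0\to (\bbk[x_1,\ldots,x_m])_{\geq d-1}(-1)\to (\bbk[x_1,\ldots,x_m])_{\geq d}\to (\bbk[x_1,\ldots,x_{m-1}])_{\geq d}\to 0$, then tensoring over $\bbk$ with $\bbk\langle e_{m+1},\ldots,e_n\rangle$, which is flat in light of Remark~\ref{rem:tateresformoreambient}. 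The same submodule-restriction-and-quotient argument used to prove Theorem~\ref{thm:HFNndcontainment} then yields
\[
\HF(\overline{N_{m,d}}) \subseteq \HF(\overline{N_{m,d-1}}) + \HF(\overline{N_{m-1,d}}).
\]

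Next I would prove, by induction on $m+d$, the strengthened claim that for all $1\leq m\leq n$ and $d\geq 1$,
\[
\HF(\overline{N_{m,d}}) \subseteq \sum_{i=1}^{m}\binom{m+d-2-i}{m-i}\,\HF(\overline{N_{i,1}}),
\]
from which the lemma is recovered by taking $m=n$ and noting $\overline{N_{n,d}}=N_{n,d}$. The base cases are $d=1$, where both sides collapse to $\HF(\overline{N_{m,1}})$, and $m=1$ with $d\geq 2$, where Lemma~\ref{lem:N1jequalsEnminus1} gives $\overline{N_{1,d}}\cong E_{(n-1)}\cong \overline{N_{1,1}}$ and the right-hand side reduces to the single summand with $i=1$ and coefficient $\binom{d-2}{0}=1$. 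For the inductive step ($m,d\geq 2$), I would plug the inductive hypothesis into both $\HF(\overline{N_{m,d-1}})$ and $\HF(\overline{N_{m-1,d}})$ on the right-hand side of the containment above and collect the resulting coefficient of each $\HF(\overline{N_{i,1}})$ using Pascal's rule
\[
\binom{m+d-3-i}{m-i}+\binom{m+d-3-i}{m-1-i}=\binom{m+d-2-i}{m-i},
\]
observing that the coefficient of $\HF(\overline{N_{m,1}})$ comes solely from the first term (since the second inductive expansion only reaches $i\leq m-1$) and equals $\binom{d-3}{0}=1$ for $d\geq 3$, with the $d=2$ case handled by the $d=1$ base case directly.

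The main obstacle is really only Step~1, namely verifying that the extension-of-scalars construction genuinely produces the sequence of $\overline{\cdot}$-modules appearing in the generalized short exact sequence with the correct Hilbert functions. Once that is in hand, the rest is essentially Pascal's triangle in disguise, with a bit of boundary bookkeeping to make sure coefficients such as $\binom{d-3}{0}$ and $\binom{-1}{0}$ are interpreted consistently across the base cases and the inductive step.
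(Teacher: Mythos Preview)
Your proposal is correct and follows essentially the same approach as the paper: iterate the two-term containment of Theorem~\ref{thm:HFNndcontainment} down to modules with second index~$1$ and count the resulting multiplicities combinatorially. The only differences are organizational. You frame the iteration as a formal induction on $m+d$ and combine coefficients via Pascal's rule, whereas the paper iterates until it reaches both boundaries $\overline{N_{i,1}}$ and $\overline{N_{1,j}}$, interprets the multiplicities as lattice-path counts, and then collapses the $\overline{N_{1,j}}$ contributions using Lemma~\ref{lem:N1jequalsEnminus1} together with the hockey-stick identity. Your explicit statement of the generalized short exact sequence for $\overline{N_{m,d}}$ is a point the paper leaves implicit when it simply says ``we can then iterate,'' so your write-up is, if anything, slightly more careful on that front.
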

\begin{proof}
Theorem~\ref{thm:HFNndcontainment} gives the containment
\[
\HF(N_{n,d}) \subset \HF(\overline{N_{n-1, d}}) + \HF(N_{n,d-1}).
\]
We can then iterate until we have $\HF(N_{n,d})$ expressed completely in terms of $\HF (\overline{N_{1,j}})$ and $\HF (\overline{N_{i,1}})$ for $2\leq i,j\leq n$. Ultimately, we reduce to
\[
\HF(N_{n,d})\subset \sum_{i=2}^n \alpha_{i,1}\HF(\overline{N_{i,1}}) + \sum_{j=2}^d \alpha_{1,j}\HF(\overline{N_{1,j}})
\]

where $\alpha_{i,j}$ counts the number of times that $N_{i,j}$ appears in the sum. This quantity $\alpha_{i,j}$ is the number of times that $(i,j)$ appears as the result of repeatedly subtracting $(1,0)$ and $(0,1)$ from $(n,d)$, with the caveat that, since $(1,i+1)$ is a base case, we never reach $(1,i)$ by subtracting $(0,1)$ from $(1,i+1)$, and similarly for $(1,j)$. One can thus interpret $\alpha_{i,1}$ as the number of integer lattice paths from $(i,2)$ to $(n,d)$ and $\alpha_{1,j}$ as the number of integer lattice paths from $(2,j)$ to $(n,d)$. The number of such lattice paths from $(i,j)$ to $(n,d)$ is given by $\binom{n-i+d-j}{n-i}$. This gives
\[
\HF(N_{n,d})\subset \sum_{i=2}^n \binom{n+d-2-i}{n-i}\HF(\overline{N_{i,1}}) + \sum_{j=2}^d \binom{n+d-2-j}{n-2}\HF(\overline{N_{1,j}}).
\]
Since $\overline{N_{1,j}} = E_{(n-1)}$ regardless of $j$ by the proof of Lemma~\ref{lem:N1jequalsEnminus1}, we can write the sum
\[
\sum_{j=2}^d \binom{n+d-2-j}{n-2}\HF(\overline{N_{1,j}}) = \left( \sum_{j=2}^d \binom{n+d-2-j}{n-2}\right)\HF(E_{(n-1)}).
\]
We can reindex and convert via the hockey stick identity to see that
\[
\sum_{j=2}^d \binom{n+d-2-j}{n-2} = \sum_{k=n-2}^{n+d-4}\binom{k}{n-2} = \binom{n+d-3}{n-1}
\]
so we have 
\begin{align*}
\HF(N_{n,d})&\subset \binom{n+d-3}{n-1}\HF(E_{(n-1)}) + \sum_{i=2}^n \binom{n+d-2-i}{n-i}\HF(\overline{N_{i,1}})\\
&= \sum_{i=1}^n \binom{n+d-2-i}{n-i}\HF(\overline{N_{i,1}}),
\end{align*}
where the incorporation of the first term into the sum uses the fact that $E_{(n-1)} \cong \overline{N_{1,1}}$

\end{proof}

\begin{lemma}\label{lem:hfni1} 
For $1\leq i\leq n$, we have the containment of sets
\[
\HF(\overline{N_{i,1}}) \subseteq \sum_{j=0}^{i-1} \HF(E_{(n-j-1)}(j)).
\]
\end{lemma}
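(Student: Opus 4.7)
The plan is to induct on $i$, adapting the short-exact-sequence strategy used in the proof of Theorem~\ref{thm:HFNndcontainment}. The base case $i=1$ is immediate from Lemma~\ref{lem:N1jequalsEnminus1}, which gives $\overline{N_{1,1}}\cong E_{(n-1)}$, and hence $\HF(\overline{N_{1,1}})=\HF(E_{(n-1)}(0))$ --- exactly the $j=0$ summand on the right-hand side.

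For the inductive step, I will first identify $\overline{N_{i,1}}$, up to an overall degree shift, with $E/I_i$: the proposition presenting $N_{n,d}$ as $\coker(\partial_{d-1}^T)$ specializes for $d=1$ to $N_{i,1}=\coker(\cdot e_1\cdots e_i)$, and tensoring with $\bbk\langle e_{i+1},\dots,e_n\rangle$ yields $\overline{N_{i,1}}\cong E/I_i$ as graded $E$-modules. Since $e_1\cdots e_i=(e_1\cdots e_{i-1})e_i$ lies in $I_{i-1}$, we have $I_i\subseteq I_{i-1}$, which produces the short exact sequence of graded $E$-modules
\[
0\to I_{i-1}/I_i\to E/I_i\to E/I_{i-1}\to 0
\]
whose middle and right terms are $\overline{N_{i,1}}$ and $\overline{N_{i-1,1}}$. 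The kernel $I_{i-1}/I_i$ has $\bbk$-basis $\{(e_1\cdots e_{i-1})e_S:S\subseteq\{i+1,\dots,n\}\}$; each of $e_1,\dots,e_i$ annihilates the generator $e_1\cdots e_{i-1}$ (by repetition for $j<i$ and by landing in $I_i$ for $j=i$), while the remaining $n-i$ variables act freely. Consequently, after a permutation of the variables of $E$, $I_{i-1}/I_i$ is isomorphic to $E_{(n-i)}(i-1)$, giving $\HF(I_{i-1}/I_i)=\HF(E_{(n-i)}(i-1))$.

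Applying the Hilbert-function additivity argument from the proof of Theorem~\ref{thm:HFNndcontainment} to this short exact sequence --- any submodule $N\subseteq\overline{N_{i,1}}$ fits in $0\to N\cap(I_{i-1}/I_i)\to N\to N/(N\cap(I_{i-1}/I_i))\to 0$, and the quotient embeds in $\overline{N_{i-1,1}}$ --- yields
\[
\HF(\overline{N_{i,1}})\subseteq\HF(E_{(n-i)}(i-1))+\HF(\overline{N_{i-1,1}}).
\]
Bounding the second summand by the inductive hypothesis $\HF(\overline{N_{i-1,1}})\subseteq\sum_{j=0}^{i-2}\HF(E_{(n-j-1)}(j))$ and reindexing produces exactly $\sum_{j=0}^{i-1}\HF(E_{(n-j-1)}(j))$, completing the induction.

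The main obstacle will be the bookkeeping in the second paragraph: identifying $I_{i-1}/I_i$ with $E_{(n-i)}(i-1)$ requires checking that a permutation of the variables of $E$ induces a bijection between submodule lattices which preserves Hilbert functions, with the correct overall degree shift by $i-1$ coming from the internal degree $-(i-1)$ of the generator $e_1\cdots e_{i-1}$. Since submodules of either module are controlled solely by the action of the $n-i$ variables acting nontrivially, this identification is valid, but care is needed to state it precisely.
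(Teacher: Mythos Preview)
Your proof is correct and shares the paper's overall strategy: identify $\overline{N_{i,1}}$ with $E/I_i$, then bound $\HF(E/I_i)$ inductively via a short exact sequence and the additivity argument from Theorem~\ref{thm:HFNndcontainment}. The difference lies in the choice of short exact sequence. You filter by $I_{i-1}$, using
\[
0\to I_{i-1}/I_i\to E/I_i\to E/I_{i-1}\to 0,
\]
identify the kernel with $E_{(n-i)}(i-1)$, and induct on $i$ alone, peeling off the summand with the largest twist at each step. The paper instead filters by $\langle e_i\rangle$, using
\[
0\to \langle e_i\rangle E/I_i\to E/I_i\to E/(\langle e_i\rangle+I_i)\to 0,
\]
identifies the quotient with $E_{(n-1)}$ and the kernel with $(E_{(n-1)}/I_{i-1})(1)$, and then iterates, simultaneously decreasing both the number of variables and the index of $I$ and peeling off the smallest-twist summand first. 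Your route keeps the ambient exterior algebra fixed throughout, which avoids any ambiguity about where the symbol $I_{i-1}$ lives at intermediate stages; the paper's route has the minor convenience that the quotient is already $E_{(n-1)}$ on the nose, without the permutation-of-variables remark you flag in your last paragraph. Both decompositions produce exactly the same sum $\sum_{j=0}^{i-1}\HF(E_{(n-j-1)}(j))$.
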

\begin{proof}
When $d = 1$, the module $N_{i,1}$ is easily computable as $N_{i,1} = \coker \begin{bmatrix} e_1 \cdots e_i \end{bmatrix} = \bbk \langle e_1, \ldots, e_i \rangle / I_i$, and so $\overline{N_{i,1}} = (E_{(i)}/I_i)\otimes \bbk \langle e_{i+1}, \ldots, e_n \rangle = E/I_i$.

Now we can reduce using the short exact sequences of $E/I_i$-modules 
\[
0\to \langle e_i\rangle E/I_i \to E/I_i\to E/(\langle e_i\rangle + I_i)\to 0.
\]
But $\langle e_i\rangle E/I_i \cong E_{(n-1)}/I_{i-1}(1)$ and that $E/(\langle e_i\rangle + I_i) \cong E_{(n-1)}$, so by a similar argument as we have used previously in the proof of Theorem~\ref{thm:HFNndcontainment}, we may now write 
\[
\HF(E/I_m)\subseteq \HF(E_{(n-1)}/I_{m-1}(1)) + \HF(E_{(n-1)}).
\]
Now we can split $\HF(E_{(n-1)}/I_{m-1}(1))$ and proceed inductively to get 
\[
\HF(E/I_i) \subseteq \sum_{j=0}^{i-1}\HF(E_{(n-j-1)}(j)).
\]
\end{proof}

\begin{theorem}\label{thm:RSnumericsEN} For $n,d\geq 2$, the rank sequence of any subcomplex of $L_{n,d}$ can be written as a positive integral sum of rank sequences of Koszul subcomplexes on fewer than $n$ variables. In particular,
\[
\RS(L_{n,d}) \subseteq \sum_{j=0}^{n-1}\binom{n-j+d-2}{d-1}\RS(\bfK_{(n-j-1)}).
\]
\end{theorem}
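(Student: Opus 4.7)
The plan is to chain Lemmas~\ref{lem:hfnnd} and \ref{lem:hfni1} together, simplify the resulting combinatorial coefficients using the hockey stick identity, and then translate the Hilbert-function containment back to a rank-sequence containment via the BGG correspondence.

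First, I would substitute the containment of Lemma~\ref{lem:hfni1} into the containment of Lemma~\ref{lem:hfnnd}, producing the double sum
\[
\HF(N_{n,d}) \subseteq \sum_{i=1}^n \binom{n+d-2-i}{n-i} \sum_{j=0}^{i-1} \HF(E_{(n-j-1)}(j)).
\]
Next, I would swap the order of summation: for each fixed $j \in \{0, 1, \ldots, n-1\}$, the index $i$ ranges over $\{j+1, \ldots, n\}$, yielding
\[
\HF(N_{n,d}) \subseteq \sum_{j=0}^{n-1} \left( \sum_{i=j+1}^n \binom{n+d-2-i}{n-i} \right) \HF(E_{(n-j-1)}(j)).
\]

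Next I would evaluate the inner sum. Substituting $k = n-i$ and using the identity $\binom{d-2+k}{k} = \binom{d-2+k}{d-2}$, the inner sum becomes $\sum_{k=0}^{n-j-1} \binom{d-2+k}{d-2}$, which collapses to $\binom{n-j+d-2}{d-1}$ by the hockey stick identity. This produces the clean containment
\[
\HF(N_{n,d}) \subseteq \sum_{j=0}^{n-1} \binom{n-j+d-2}{d-1} \HF(E_{(n-j-1)}(j)).
\]

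Finally, I would translate back to rank sequences using BGG. Since $L_{n,d} = \bbL(N_{n,d}(-n+1))$, by Remark~\ref{rmk:hfrs} the Hilbert function of a submodule of $N_{n,d}$ (with appropriate twist) records exactly the rank sequence of the corresponding subcomplex of $L_{n,d}$. On the other side, Remark~\ref{rmk:twist} gives $\bbL(E_{(m)}) = \bfK_{(m)}[-n+m]$, so Hilbert functions of submodules of twisted copies of $E_{(m)}$ correspond to rank sequences of subcomplexes of $\bfK_{(m)}$. Matching up the twists on both sides delivers
\[
\RS(L_{n,d}) \subseteq \sum_{j=0}^{n-1} \binom{n-j+d-2}{d-1} \RS(\bfK_{(n-j-1)}).
\]

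The main obstacle, and the step demanding the greatest care, is the bookkeeping of twists and homological shifts in the passage between $E$-modules and their BGG transforms: one must check that the $(j)$-twist on $E_{(n-j-1)}$ coming from Lemma~\ref{lem:hfni1}, combined with the $(-n+1)$-twist baked into the definition of $L_{n,d}$, aligns the Koszul rank sequence of $\bfK_{(n-j-1)}$ with the correct range of homological positions inside the rank sequence of $L_{n,d}$. The combinatorial step itself, once set up in the right form for the hockey stick identity, is routine.
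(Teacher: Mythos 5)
Your proposal is correct and takes essentially the same approach as the paper: combine Lemmas~\ref{lem:hfnnd} and~\ref{lem:hfni1}, swap the summation order and collapse the inner sum via the hockey stick identity to get the coefficient $\binom{n-j+d-2}{d-1}$, and translate to rank sequences through BGG after twisting by $(-n+1)$. The only (immaterial) difference is that you perform the combinatorial simplification on the Hilbert-function side before applying $\bbL$, whereas the paper applies the twist and $\bbL$ first and then simplifies the resulting sum of $\RS$ sets.
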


\begin{proof}
Combining Lemmas~\ref{lem:hfnnd}~and~\ref{lem:hfni1}, we can see that 
\[
\HF(N_{n,d}) \subseteq \sum\limits_{i=1}^{n} \sum\limits_{j=0}^{i-1} \binom{n+d-2-i}{d-2} \HF(E_{(n-j-1)}(j)).
\]
Twisting each side of this equality by $(-n+1)$ gives
\[
\HF(N_{n,d}(-n+1)) \subseteq \sum\limits_{i=1}^{n} \sum\limits_{j=0}^{i-1} \binom{n+d-2-i}{d-2} \HF(E_{(n-j-1)}(-n+j+1)),
\]
which, fed through the functor $\bbL$, yields a containment of sets of rank sequences: 
\[
\RS(L_{n,d}) \subseteq \sum\limits_{i=1}^{n} \sum\limits_{j=0}^{i-1} \binom{n+d-2-i}{d-2} \RS(\bfK_{(n-j-1)}).
\]
We can switch the order of the double sum, reindex, and apply the hockey stick identity once again to conclude the proof:
\[
\begin{aligned}
\sum\limits_{i=1}^n \sum\limits_{j=0}^{i-1} \binom{n+d-2-i}{d-2} \RS(\bfK_{(n-j-1)}) &\subseteq \sum\limits_{j=0}^{n-1} \sum\limits_{i=j+1}^n \binom{n+d-2-i}{d-2} \RS(\bfK_{(n-j-1)}) \\ 
&= \sum\limits_{j=0}^{n-1} \left( \sum\limits_{i=0}^{n-j-1} \binom{d-2+i}{d-2} \right) \RS(\bfK_{(n-j-1)}) \\
&= \sum\limits_{j=0}^{n-1} \binom{n-j+d-2}{d-1} \RS(\bfK_{(n-j-1)}).
\end{aligned}
\]
\end{proof}

\begin{example}
Let $n=4, d=3$. The complex $L_{4,3}$ resolves the ideal of maximal minors of the matrix 
\[
\begin{bmatrix}
x_1 & x_2 & x_3 & x_4 & 0 & 0\\
0 & x_1 & x_2 & x_3 & x_4 & 0\\
0 & 0 & x_1 & x_2 & x_3 & x_4
\end{bmatrix}
\]
and has the form
\[
0 \to S^{10} \to S^{36} \to S^{45} \to S^{20} \to 0.
\]
We can use Theorem ~\ref{thm:RSnumericsEN} to rule out some integer sequences as possible rank sequences for subcomplexes of $L_{4,3}$. The containment in Theorem ~\ref{thm:RSnumericsEN} states that 
\begin{align*}
\RS(L_{4,3}) &\subseteq\sum_{j=0}^3\binom{5-j}{2}\RS(\bfK_{(3-j)})\\
&= 10\RS(\bfK_{(3)}) + 6\RS(\bfK_{(2)}) + 3\RS(\bfK_{(1)}) + \RS(\bfK_{(0)})
\end{align*}

that is, any rank sequence of a subcomplex must be expressible as a sum of 10 rank sequences of subcomplexes of the Koszul complex on 3 variables, 6 rank sequences of subcomplexes of the Koszul complex on 2 variables, 3 rank sequences of subcomplexes of the Koszul complex on 1 variable, and 1 rank sequence of a subcomplex of the Koszul complex on 0 variables. 

If we consider the Koszul complex on 3 variables, Theorem ~\ref{thm:basickoszulranks} tells us that any rank sequence $(r_0, r_1, r_2, r_3)$ of a subcomplex of $\bfK_{(3)}$ with $r_3 = 1$ must have $r_2 = 3$. This means that for a rank sequence $(r_0, r_1, r_2, r_3)$ of a subcomplex of $L_{4,3}$, we must have $r_2\geq 3r_3$. For instance, we may say for certain that the sequence $(10,16,20,8)$ is not a possible rank sequence of a subcomplex of $L_{4,3}$, since $20<3\cdot 8$. In this way, we are able to use our complete characterization of rank sequence of Koszul subcomplexes to eliminate certain potential rank sequences from consideration in the Eagon--Northcott case.
\end{example}

\section{More General Resolutions}

With our previous results for Koszul and Eagon--Northcott complexes resolving powers of the maximal ideal in hand, we turn now to a more general setting. In particular, we are interested in other ideals $I$ that specialize to powers of the maximal ideal in such a way that $S/I$ is still resolved by the Koszul or Eagon--Northcott complex

\subsection{The general Koszul complex}

The Koszul complex can be defined more generally to give a minimal free resolution of a complete intersection. For $f_1, \ldots, f_m \in S$ a regular sequence of homogeneous elements, we replace the differential in definition ~\ref{def:koszul} by $\del_d(e_T) = \sum_{j = 1}^{d} (-1)^{j} f_{i_j} e_{T - i_j}$, adjusting the twists accordingly.

\begin{theorem}\label{thm:generalkoszulranks}
Let $f_1, \ldots, f_m$ be a regular sequence of homogeneous polynomials in $S$. An integer sequence $r = (r_0, \ldots, r_m)$ is in $\RS(\bfK(f_1, \ldots, f_m))$ if and only if it satisfies 
\[
0 \leq r_{i+1} \leq r_i^{(i)} \text{ for } 1 \leq i \leq m-1.
\]
\end{theorem}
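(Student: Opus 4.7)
The plan is to mimic the proof of Theorem~\ref{thm:basickoszulranks} by introducing a BGG-style correspondence tailored to the regular sequence. Specifically, let $E' = \bbk\langle e_1, \ldots, e_m\rangle$ and define a ``twisted'' functor $\bbL_f$ from graded $E'$-modules to linear complexes of free $S$-modules by the differential $1 \otimes g \mapsto \sum_i f_i \otimes g e_i$. Then $\bbL_f(E'(-m)) = \bfK(f_1, \ldots, f_m)$, and the goal becomes to show that $E'$-submodules of $E'(-m)$ are in bijection with subcomplexes of $\bfK(f_1, \ldots, f_m)$ of the $\bbk$-linear type considered in this paper.

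The key auxiliary input is that a homogeneous regular sequence is automatically $\bbk$-linearly independent: any nontrivial $\bbk$-linear relation would force some $f_j$ to be a $\bbk$-combination of the others, and reducing modulo the preceding $f_i$ would contradict the nonzerodivisor condition. Given this, the coordinates of the Koszul differential with respect to $f_1, \ldots, f_m$ are well-defined, and a collection of $\bbk$-subspaces $\{F_i^\bbk \subseteq \bigoplus_{|T|=i} \bbk \cdot e_T\}$ yields a subcomplex of $\bfK(f_1, \ldots, f_m)$ if and only if each $\del_i$ sends every $v \in F_i^\bbk$ to an element whose ``$f_j$-coordinate'' lies in $F_{i-1}^\bbk$ for every $j$. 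This is precisely the condition that $\bigoplus_i F_i^\bbk$ be an $E'$-submodule of $E'(-m)$, which is the combinatorial condition used in Theorem~\ref{thm:basickoszulranks}.

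Once the bijection is in hand, the theorem follows by the same invocation of Aramova--Herzog--Hibi used previously: Hilbert sequences of $E'$-submodules of $E'$ are exactly those satisfying $0 \leq h_{i+1} \leq h_i^{(i)}$, and shifting by $-m$ translates this directly into the claimed Macaulay bounds on rank sequences of $\bfK(f_1, \ldots, f_m)$.

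The main obstacle I anticipate is the case in which the $f_j$ have distinct degrees. Then the free summands of $\bfK(f_1, \ldots, f_m)$ carry different internal twists, and the graded injection requirement forces $F_i^\bbk$ to decompose along the shift strata. This only restricts the admissible $\bbk$-subspaces, so the ``only if'' direction still yields the Macaulay bounds without difficulty. For the ``if'' direction, I would lean on the fact that the Aramova--Herzog--Hibi bounds are realized by monomial (e.g.\ lex) $E'$-submodules, which correspond to ``coordinate'' subspaces $F_i^\bbk$ spanned by subsets of $\{e_T\}$; such subspaces automatically respect any internal grading on the free summands, so realizability survives intact and the full characterization extends from Theorem~\ref{thm:basickoszulranks} to the general regular-sequence case.
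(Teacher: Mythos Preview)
Your proposal is correct and amounts to the same argument as the paper's, just packaged differently: the paper transports subcomplexes along the ring map $\bbk[x_1,\ldots,x_m]\to S'$, $x_i\mapsto f_i$, to show $\RS(\bfK(f_1,\ldots,f_m))=\RS(\bfK(x_1,\ldots,x_m))$ and then invokes Theorem~\ref{thm:basickoszulranks}, whereas you re-derive the bijection with $E'$-submodules directly via a twisted functor $\bbL_f$---both hinge on the $\bbk$-linear independence of the $f_j$ to read off ``$f_j$-coordinates'' uniquely. Your explicit handling of the unequal-degree case (realizing the Macaulay bounds by monomial submodules, so the resulting coordinate subcomplexes are automatically graded) addresses a subtlety the paper's proof leaves implicit.
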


\begin{proof}
We will show that $\RS(\bfK(f_1, \ldots, f_m)) = \RS(\bfK(x_1, \ldots, x_m))$, then apply Theorem~\ref{thm:basickoszulranks}.

Let $\bfK$ be the Koszul complex on the variables $x_1, \ldots, x_m$. For $\bfK'$ a general Koszul complex $\bfK(f_1, \ldots, f_m)$ over the ring $S' = \bbk[y_1, \ldots, y_n]$, there is a map $\bfK \to \bfK'$ induced by the map $S\to S'$ sending $x_i$ to $f_i$. 

If $\bfF$ is a subcomplex of $\bfK$, then the image of $\bfF$ under this map is a subcomplex of $\bfK$ with the same rank sequence. So any possible rank sequence of a subcomplex of $\bfK$ must also be possible for a subcomplex of $\bfK'$. 

To see that the possible rank sequences for subcomplexes of $\bfK'$ are \emph{exactly} those that are possible for subcomplexes of $\bfK$, we need to check that given a subcomplex $\bfF'$ of $\bfK'$, the differentials of $\bfF'$ are described by matrices over the subalgebra $R = \bbk[f_1, \ldots, f_m] \subseteq S'$.

For each $i$, we have
\[
\begin{tikzcd}
F'_i \arrow[d] \arrow[r, "\del"] & F'_{i-1} \arrow[d] \\
K'_i \arrow[r, "\del'"] & K'_{i-1}          
\end{tikzcd}
\]

where the vertical maps are given by matrices over $\bbk$. So the differential $\del$ is a matrix over $R$ if and only if $\del'$ is. But entries of $\del'$ are linear in the $f_i$, so they are defined as matrices over $R$. 

Now given a subcomplex $\bfF'$ of $\bfK'$, we need only replace each $F'_i$ by a free $S$-module of the same rank and each $f_i$ in the differential by $x_i$ to obtain a subcomplex $\bfF$ of $\bfK$ with the same rank sequence. 
\end{proof}

\subsection{More general Eagon--Northcott complexes.}
Just as the Koszul complex can be generalized to give a minimal free resolution of a complete intersection, the Eagon--Northcott complex can be generalized to give a minimal free resolution of certain Cohen--Macaulay algebras of the form $S/I$ where $I$ has the maximum possible codimension. We can relate the behavior of subcomplexes of the Eagon--Northcott complex resolving $\frakm^d$ to the behavior of subcomplexes of a general Eagon--Northcott complex as follows. First, we consider a motivating example.

\begin{example}\label{ex:genericEN}
Let $Y = [y_{i,j}]$ be a $p \times q$ generic matrix with $p \leq q$. Then there is a containment of sets 
\[\RS(\bfEN(Y)) \subset \RS(\bfEN(M^{q-p+1,p})).\]
\end{example}

Let $n = pq$, so our matrix is a map $S^q\to S^p$ for $S = \bbk[y_{i,j}] \cong \bbk[x_1, \ldots, x_n]$. This specializes to the matrix 
\[
M^{q-p+1,p} = \begin{bmatrix}
x_1 & x_2 & \cdots &x_{q-p+1} & 0 &\cdots & 0\\
0 & x_1 & \cdots & x_{q-p} & x_{q-p+1} & \cdots & 0\\
\vdots &  &  \ddots & & \ddots & \ddots & \vdots\\
0 & \cdots & 0 & x_1 & \cdots & x_{q-p} & x_{q-p+1}
\end{bmatrix}
\]

under a map that we will call $\varphi$. The maximal minors of this matrix define the ideal $(x_1, \ldots, x_{q-p+1})^p$. 

The map $\varphi$ gives us a map of complexes $\bfEN(Y) \to \bfEN(M^{q-p+1,p})$. What is more, 
under $\varphi$ any subcomplex of $\bfEN(Y)$ gives a subcomplex of $\bfEN(M^{q-p+1,p})$. 

This gives us a containment
\begin{align}\label{ineq:specialize}
\RS(\bfEN(Y)) \subset \RS(\bfEN(M^{q-p+1,p})).
\end{align}

Note that the generic nature of $Y$ had no bearing on the argument in Example~\ref{ex:genericEN}, so a more general statement relating general Eagon--Northcott complexes to the complex $\bfEN(M^{n,d})$ holds by the same reasoning. 

\begin{theorem}\label{thm:generalENranks}
Let $Z$ be a $p \times q$ matrix whose maximal minors define an ideal $I$ whose codimension is $q-p+1$ and where $S/I$ is Cohen--Macaulay. Then there is a containment of sets 
\[\RS(\bfEN(Z)) \subset \RS(\bfEN(M^{q-p+1,p})).\]
\end{theorem}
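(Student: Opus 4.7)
The plan is to extend the argument of Example~\ref{ex:genericEN}, which established the containment for the generic matrix $Y$ by producing a ring homomorphism specializing $Y$ to $M^{q-p+1,p}$ and inducing a map of Eagon--Northcott complexes under which subcomplexes descend to subcomplexes of the same rank sequence.

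To handle an arbitrary $Z$, I would use the generic $p \times q$ matrix $Y$ over $R = \bbk[y_{ij}]$ as an intermediary: both $Z$ and $M^{q-p+1,p}$ arise as specializations of $Y$ via ring homomorphisms $\varphi_Z, \varphi_M : R \to S$ sending $y_{ij}$ to $z_{ij}$ and to the $(i,j)$-entry of $M^{q-p+1,p}$, respectively. Since the argument in Example~\ref{ex:genericEN} never used $Y$'s genericity, each of $\varphi_Z$ and $\varphi_M$ induces a map of complexes under which subcomplexes of $\bfEN(Y)$ descend to subcomplexes of the target with the same rank sequence, giving $\RS(\bfEN(Y)) \subseteq \RS(\bfEN(Z))$ and $\RS(\bfEN(Y)) \subseteq \RS(\bfEN(M^{q-p+1,p}))$.

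To conclude, I would promote the first containment to an equality $\RS(\bfEN(Z)) = \RS(\bfEN(Y))$, i.e., show that every subcomplex of $\bfEN(Z)$ has its rank sequence realized by some subcomplex of $\bfEN(Y)$. Since a subcomplex is determined by $\bbk$-subspaces $V_i$ together with compatibility conditions that, for the linear differentials of Eagon--Northcott, amount to $\bbk$-linear relations among the matrix entries holding in the ambient ring, the codimension--$(q-p+1)$ and Cohen--Macaulay hypotheses force $\bfEN(Z)$ to be a minimal free resolution with the same graded structure as $\bfEN(Y)$. This structural rigidity should allow any subcomplex relation over $Z$ to be lifted, possibly after rechoosing the $V_i$'s, to a subcomplex relation over $Y$. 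Composing such a lift with $\varphi_M$ then produces the desired subcomplex of $\bfEN(M^{q-p+1,p})$.

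The main obstacle is making this lifting precise, as specialization can introduce spurious $\bbk$-linear relations among matrix entries (for instance, when some $z_{ij}$ vanishes or when several entries coincide) that allow subcomplexes of $\bfEN(Z)$ which do not appear in $\bfEN(Y)$ with the same $V_i$'s. If a direct lifting argument proves elusive, an alternative route is a flat or Gr\"obner degeneration connecting $Z$ to $M^{q-p+1,p}$, invoking semicontinuity of rank sequences to deduce the set-theoretic containment. A convenient simplification is that by Remark~\ref{rmk:linearstrand}, the first entry of any rank sequence is $0$ or $1$, so the nonlinear first differential of Eagon--Northcott requires no separate treatment and the entire lifting argument can be confined to the linear strand.
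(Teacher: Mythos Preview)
Your approach has a genuine gap, and in fact the lifting step you flag as an ``obstacle'' is not merely difficult but false. You want to promote $\RS(\bfEN(Y)) \subseteq \RS(\bfEN(Z))$ to an equality, i.e., to show that every rank sequence achievable over $Z$ lifts to one over the generic matrix $Y$. But Example~\ref{ex:specialize} refutes this: both matrices $Z_1 = \begin{bsmallmatrix} x & y & z\\ y & z & w\end{bsmallmatrix}$ and $Z_2 = \begin{bsmallmatrix} 0 & y & z\\ y & z & 0\end{bsmallmatrix}$ satisfy the theorem's hypotheses, yet $(1,2,1)$ lies in $\RS(\bfEN(Z_2))$ and not in $\RS(\bfEN(Z_1))$. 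Were your lifting valid, $(1,2,1) \in \RS(\bfEN(Z_2))$ would force $(1,2,1) \in \RS(\bfEN(Y))$, and then the specialization $Y \to Z_1$ would force $(1,2,1) \in \RS(\bfEN(Z_1))$, a contradiction. The intuition is exactly the one you raise against yourself: specialization can only create new $\bbk$-linear relations among matrix entries, so the generic matrix carries the \emph{fewest} rank sequences, not the most, and $Y$ cannot serve as an upper bound for $\RS(\bfEN(Z))$.

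The paper avoids the generic matrix entirely and specializes in the \emph{other} direction, directly from $Z$ to $M^{q-p+1,p}$. Under the codimension and Cohen--Macaulay hypotheses, $\bfEN(Z)$ is a minimal free resolution of $S/I$, and the artinian reduction of $S/I$ (modding out by a maximal regular sequence) is $S'/\frakm^{p}$ for $S' \cong \bbk[x_1,\ldots,x_{q-p+1}]$, which is resolved by $\bfEN(M^{q-p+1,p})$. That quotient map carries $\bfEN(Z)$ to $\bfEN(M^{q-p+1,p})$ and, exactly as in Example~\ref{ex:genericEN}, takes subcomplexes to subcomplexes with the same rank sequence. Your fallback of a flat or Gr\"obner degeneration is pointed in the right direction, but the actual mechanism is simpler: a single ring surjection given by a regular-sequence quotient, not a one-parameter family.
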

\begin{proof}

With the hypotheses above, $\bfEN(Z)$ gives a minimal free resolution of $S/I$. Furthermore, the artinian reduction of $S/I$ is isomorphic to $S'/\frakm^p$ for a polynomial ring $S' \cong \bbk[x_1, \ldots, x_{q-p+1}]$. This specialization takes any subcomplex of $\bfEN(Z)$ to a subcomplex of $\bfEN(M^{q-p+1,p})$, so (\ref{ineq:specialize}) holds for $\bfEN(Z)$ as it does in Example~\ref{ex:genericEN}.

\end{proof}

While this theorem relates rank sequences of subcomplexes of the entire complexes $\bfEN(Z)$ and $\bfEN(M^{q-p+1,p})$ rather than their degree $d$ strands, Remark~\ref{rmk:linearstrand} tells us that our restrictions on $\RS(L_{q-p+1,p})$, together with the above theorem, still give us valuable information about $\RS(\bfEN(Z))$. It should be noted, however, that the result in Theorem~\ref{thm:generalENranks} is a strict containment, as demonstrated in the following example.

\begin{example}\label{ex:specialize}
Let $S = \bbk[x,y,z,w]$. Consider the Eagon--Northcott complex on the matrix $\begin{bmatrix}
0 & y & z\\
y & z & 0
\end{bmatrix}$, which resolves the square of the maximal ideal in the subalgebra $\bbk[y,z]\subset S$. 
This is a specialization of the Eagon--Northcott complex on 
$\begin{bmatrix}
x & y & z\\
y & z & w
\end{bmatrix}$
obtained via the map $\varphi: S\to S$ defined by $x,w\mapsto 0$ and $y,z\mapsto y,z$, so we have the following map of complexes: 
\[
\xymatrix{
\bfF':  0 \ar[r] & S(-3)^2 \ar[rr]^{\begin{bsmallmatrix}-z & -w\\ y & z\\ x & -y \end{bsmallmatrix}} \ar[d]_{\varphi^*} & & S(-2)^3 \ar[rrrr]^{\begin{bsmallmatrix} -y^2+xz & -yz+xw & -z^2+yw\end{bsmallmatrix}} \ar[d]_{\varphi^*} & & & & S^1 \ar[d]_{\varphi^*} \\
\bfF: 0 \ar[r] & S(-3)^2 \ar[rr]_{\begin{bsmallmatrix}-z & 0\\ y & z\\ 0 & -y \end{bsmallmatrix}}                       & & S(-2)^3 \ar[rrrr]_{\begin{bsmallmatrix} -y^2 & -yz & -z^2\end{bsmallmatrix}}                            & & & & S^1 
}
\]
Consider the following subcomplex of $\bfF$:
\[
\bfG:  0 \longrightarrow  S(-3) \xrightarrow{\begin{bsmallmatrix} -z\\ y\end{bsmallmatrix}}  S(-2)^2 \xrightarrow{\begin{bsmallmatrix} -y^2 & -yz \end{bsmallmatrix}}  S.
\]

which has $\rs(\bfG) = (1,2,1)$. The subcomplex $\bfG$ is realized as the image of
\[
\bfG': 0 \longrightarrow  S(-3) \xrightarrow{\begin{bsmallmatrix} -z\\ y\end{bsmallmatrix}}  S(-2)^2 \xrightarrow{\begin{bsmallmatrix} -y^2 + xz & -yz +xw \end{bsmallmatrix}}  S
\]
under $\varphi^*$. However, one can check that $\bfG'$ is not a subcomplex of $\bfF'$. Moreover, a straightforward linear algebra computation confirms that there is no subcomplex of $\bfF'$ with rank sequence $(1,2,1)$, so $\RS(\bfF)\subsetneq\RS(\bfG)$.
\end{example}

\bibliographystyle{alpha}
\bibliography{bibliography}

\end{document}